\pdfoutput=1

\documentclass{siamart0216}

\usepackage{fullpage}
\usepackage{hyperref}
\usepackage{amsmath,amssymb,amsfonts,mathrsfs}
\usepackage[titletoc,toc,title]{appendix}

\usepackage{array} 
\usepackage[utf8]{inputenc}
\usepackage{listings}
\usepackage{mathtools}
\usepackage{pdfpages}
\usepackage[textsize=footnotesize,color=green]{todonotes}
\usepackage{bm}
\usepackage[normalem]{ulem}
\usepackage{hhline}

\usepackage{algorithm}
\usepackage[noend]{algpseudocode}
\usepackage{algorithmicx}
\algblock{ParFor}{EndParFor}
\algnewcommand\algorithmicparfor{\textbf{parfor}}
\algnewcommand\algorithmicpardo{\textbf{do}}
\algnewcommand\algorithmicendparfor{\textbf{end\ parfor}}
\algrenewtext{ParFor}[1]{\algorithmicparfor\ #1\ \algorithmicpardo}
\algrenewtext{EndParFor}{\algorithmicendparfor}

\usepackage{graphicx}
\usepackage{subfig}
\usepackage{color}


\usepackage{pgfplots}
\usepackage{pgfplotstable}
\definecolor{markercolor}{RGB}{124.9, 255, 160.65}
\pgfplotsset{width=10cm,compat=1.3}
\pgfplotsset{
tick label style={font=\small},
label style={font=\small},
legend style={font=\small}
}

\usetikzlibrary{calc}

\newcommand{\logLogSlopeTriangle}[5]
{

    \pgfplotsextra
    {
        \pgfkeysgetvalue{/pgfplots/xmin}{\xmin}
        \pgfkeysgetvalue{/pgfplots/xmax}{\xmax}
        \pgfkeysgetvalue{/pgfplots/ymin}{\ymin}
        \pgfkeysgetvalue{/pgfplots/ymax}{\ymax}

        \pgfmathsetmacro{\xArel}{#1}
        \pgfmathsetmacro{\yArel}{#3}
        \pgfmathsetmacro{\xBrel}{#1-#2}
        \pgfmathsetmacro{\yBrel}{\yArel}
        \pgfmathsetmacro{\xCrel}{\xArel}

        \pgfmathsetmacro{\lnxB}{\xmin*(1-(#1-#2))+\xmax*(#1-#2)} 
        \pgfmathsetmacro{\lnxA}{\xmin*(1-#1)+\xmax*#1} 
        \pgfmathsetmacro{\lnyA}{\ymin*(1-#3)+\ymax*#3} 
        \pgfmathsetmacro{\lnyC}{\lnyA+#4*(\lnxA-\lnxB)}
        \pgfmathsetmacro{\yCrel}{\lnyC-\ymin)/(\ymax-\ymin)} 

        \coordinate (A) at (rel axis cs:\xArel,\yArel);
        \coordinate (B) at (rel axis cs:\xBrel,\yBrel);
        \coordinate (C) at (rel axis cs:\xCrel,\yCrel);

        \draw[#5]   (A)-- node[pos=0.5,anchor=north] {1}
                    (B)-- 
                    (C)-- node[pos=0.5,anchor=west] {#4}
                    cycle;
    }
}

\newcommand{\logLogSlopeTriangleNeg}[5]
{

    \pgfplotsextra
    {
        \pgfkeysgetvalue{/pgfplots/xmin}{\xmin}
        \pgfkeysgetvalue{/pgfplots/xmax}{\xmax}
        \pgfkeysgetvalue{/pgfplots/ymin}{\ymin}
        \pgfkeysgetvalue{/pgfplots/ymax}{\ymax}

        \pgfmathsetmacro{\xArel}{#1}
        \pgfmathsetmacro{\yArel}{#3}
        \pgfmathsetmacro{\xBrel}{#1-#2}
        \pgfmathsetmacro{\yBrel}{\yArel}
        \pgfmathsetmacro{\xCrel}{\xArel}

        \pgfmathsetmacro{\lnxB}{\xmin*(1-(#1-#2))+\xmax*(#1-#2)} 
        \pgfmathsetmacro{\lnxA}{\xmin*(1-#1)+\xmax*#1} 
        \pgfmathsetmacro{\lnyA}{\ymin*(1-#3)+\ymax*#3} 
        \pgfmathsetmacro{\lnyC}{\lnyA+#4*(\lnxA-\lnxB)}
        \pgfmathsetmacro{\yCrel}{\lnyC-\ymin)/(\ymax-\ymin)} 

        \coordinate (A) at (rel axis cs:\xArel,\yArel);
        \coordinate (B) at (rel axis cs:\xBrel,\yBrel);
        \coordinate (C) at (rel axis cs:\xCrel,\yCrel);

        \draw[#5]   (A)-- node[pos=.5,anchor=south] {1}
                    (B)-- 
                    (C)-- node[pos=0.5,anchor=west] {#4}
                    cycle;
    }
}

\newcommand{\logLogSlopeTriangleFlipNeg}[5]
{

    \pgfplotsextra
    {
        \pgfkeysgetvalue{/pgfplots/xmin}{\xmin}
        \pgfkeysgetvalue{/pgfplots/xmax}{\xmax}
        \pgfkeysgetvalue{/pgfplots/ymin}{\ymin}
        \pgfkeysgetvalue{/pgfplots/ymax}{\ymax}

        \pgfmathsetmacro{\xBrel}{#1-#2}
        \pgfmathsetmacro{\yBrel}{#3}
        \pgfmathsetmacro{\xCrel}{#1}

        \pgfmathsetmacro{\lnxB}{\xmin*(1-(#1-#2))+\xmax*(#1-#2)} 
        \pgfmathsetmacro{\lnxA}{\xmin*(1-#1)+\xmax*#1} 
        \pgfmathsetmacro{\lnyA}{\ymin*(1-#3)+\ymax*#3} 
        \pgfmathsetmacro{\lnyC}{\lnyA+#4*(\lnxA-\lnxB)}
        \pgfmathsetmacro{\yCrel}{\lnyC-\ymin)/(\ymax-\ymin)} 

	\pgfmathsetmacro{\xArel}{\xBrel}
        \pgfmathsetmacro{\yArel}{\yCrel}

        \coordinate (A) at (rel axis cs:\xArel,\yArel);
        \coordinate (B) at (rel axis cs:\xBrel,\yBrel);
        \coordinate (C) at (rel axis cs:\xCrel,\yCrel);

        \draw[#5]   (A)-- node[pos=0.5,anchor=east] {#4}
                    (B)-- 
                    (C)-- node[pos=0.5,anchor=north] {1}
                    cycle;
    }
}

\newcommand{\logLogSlopeTriangleFlip}[5]
{

    \pgfplotsextra
    {
        \pgfkeysgetvalue{/pgfplots/xmin}{\xmin}
        \pgfkeysgetvalue{/pgfplots/xmax}{\xmax}
        \pgfkeysgetvalue{/pgfplots/ymin}{\ymin}
        \pgfkeysgetvalue{/pgfplots/ymax}{\ymax}

        \pgfmathsetmacro{\xBrel}{#1-#2}
        \pgfmathsetmacro{\yBrel}{#3}
        \pgfmathsetmacro{\xCrel}{#1}

        \pgfmathsetmacro{\lnxB}{\xmin*(1-(#1-#2))+\xmax*(#1-#2)} 
        \pgfmathsetmacro{\lnxA}{\xmin*(1-#1)+\xmax*#1} 
        \pgfmathsetmacro{\lnyA}{\ymin*(1-#3)+\ymax*#3} 
        \pgfmathsetmacro{\lnyC}{\lnyA+#4*(\lnxA-\lnxB)}
        \pgfmathsetmacro{\yCrel}{\lnyC-\ymin)/(\ymax-\ymin)} 

	\pgfmathsetmacro{\xArel}{\xBrel}
        \pgfmathsetmacro{\yArel}{\yCrel}

        \coordinate (A) at (rel axis cs:\xArel,\yArel);
        \coordinate (B) at (rel axis cs:\xBrel,\yBrel);
        \coordinate (C) at (rel axis cs:\xCrel,\yCrel);

        \draw[#5]   (A)-- node[pos=0.5,anchor=east] {#4}
                    (B)-- 
                    (C)-- node[pos=0.5,anchor=south] {1}
                    cycle;
    }
}


\newcommand{\td}[2]{\frac{{\rm d}#1}{{\rm d}{\rm #2}}}
\newcommand{\pd}[2]{\frac{\partial#1}{\partial#2}}

\newcommand{\nor}[1]{\left\| #1 \right\|}

\newcommand{\LRp}[1]{\left( #1 \right)}
\newcommand{\LRs}[1]{\left[ #1 \right]}

\newcommand{\LRb}[1]{\left| #1 \right|}
\newcommand{\LRc}[1]{\left\{ #1 \right\}}

\newcommand{\Grad} {\ensuremath{\nabla}}
\newcommand{\Div} {\ensuremath{\nabla\cdot}}

\newcommand{\jump}[1] {\ensuremath{\LRs{\![#1]\!}}}
\newcommand{\avg}[1] {\ensuremath{\LRc{\!\{#1\}\!}}}

\newcommand{\Dhat}{\widehat{D}}
\newcommand{\Lhat}{L^2\LRp{\Dhat}}

\newcommand{\Oh}{\Omega_h}

\newcommand{\eval}[2][\right]{\relax
  \ifx#1\right\relax \left.\fi#2#1\rvert}

\newcommand{\LinfDk}{L^{\infty}\LRp{D^k}}

\newcommand{\diag}[1]{{\rm diag}\LRp{#1}}

\newcolumntype{C}[1]{>{\centering\let\newline\\\arraybackslash\hspace{0pt}}m{#1}}


\makeatletter
\renewcommand\d[1]{\mspace{6mu}\mathrm{d}#1\@ifnextchar\d{\mspace{-3mu}}{}}
\makeatother

\date{}
\author{Jesse Chan, Russell J.\ Hewett, T.\ Warburton}
\title{Weight-adjusted discontinuous Galerkin methods: curvilinear meshes}

\begin{document}

\maketitle

\begin{abstract}
Traditional time-domain discontinuous Galerkin (DG) methods result in large storage costs at high orders of approximation due to the storage of dense elemental matrices.  In this work, we propose a weight-adjusted DG (WADG) methods for curvilinear meshes which reduce storage costs while retaining energy stability.  \textit{A priori} error estimates show that high order accuracy is preserved under sufficient conditions on the mesh, which are illustrated through convergence tests with different sequences of meshes.  Numerical and computational experiments verify the accuracy and performance of WADG for a model problem on curved domains.  
\end{abstract}


\section{Introduction}

Accurate simulations of wave propagation in heterogeneous media and complex geometries are important to applications in seismology, electromagnetics, and engineering design.  High order methods are advantageous for wave propagation, as they result in low numerical dispersion and higher accuracy per-degree of freedom (degree of freedom), while unstructured mesh methods are applicable to irregular domains.  Finite element methods are well-suited to such purposes, as they can accomodate both unstructured meshes and arbitrarily high order approximations.  In this work, we specialize to high order time-domain discontinuous Galerkin (DG) methods on unstructured meshes, which are commonly used in time-domain wave propagation problems governed by hyperbolic partial differential equations (PDEs) \cite{hesthaven2007nodal, karniadakis2013spectral}.  

High order DG methods with explicit timestepping are well-suited for hyperbolic problems; however, they result in a large number of floating point operations.  Kl{\"o}ckner et al.\ addressed this nontrivial cost by developing an implementation of DG on Graphics Processing Units (GPUs) in \cite{klockner2009nodal}, where the structure of time-explicit DG was found to map extremely well to the coarse and fine grained parallelism present in accelerator architectures.  Large problem sizes can be accomodated by parallelizing over multiple GPUs, which has been shown to yield scalable and efficient solvers \cite{godel2010scalability,modave2015nodal}.  

A limitation of most efficient implementations of DG methods is the assumption of affine simplicial elements, under which efficient quadrature-free DG implementations can be derived based only on reference element matrices.  When using high order methods for hyperbolic problems, numerous studies have shown that methods are limited to lower order accuracy if curved domain boundaries are approximated using planar tetrahedral elements \cite{wang2009discontinuous, zhang2015simple, zhang2016curved}.  High order accuracy is restored by accounting for the curvature of the boundary using high order accurate elemental mappings.  The most commonly used mappings are isoparametric \cite{persson2009curved, toulorge2016optimizing}, where the order of the mapping matches the order of approximation.  Rational \textit{isogeometric} mappings (which exactly represent geometries resulting from engineering design) have also been explored \cite{cottrell2009isogeometric, michoski2015foundations, engvall2016isogeometric, engvall2016isogeometricUnstructured}.  

On curvilinear elements, Jacobians and geometric factors vary spatially.  As a result, implementations of DG for affine elements lose high order accuracy or energy stability when naively applied to curvilinear elements.  These spatially varying Jacobians and geometric factors increase the computational cost of evaluating the DG variational formulation.  Spatially varying Jacobians are also embedded into elemental mass matrices, necessitating the precomputation and storage of dense matrices over each element.\footnote{An alternative to precomputation and storage of mass matrix factorizations is an on-the-fly solution of local matrix systems; however, such an approach is typically too expensive in the context of time-explicit methods.}  

On unstructured hexahedral meshes, Spectral Element Methods (SEM) \cite{patera1984spectral, canuto2012spectral} provide efficient ways to sidestep additional storage costs.  By employing high order Lagrange bases defined at tensor product Gauss-Legendre-Lobatto quadrature points, the mass matrix is well-approximated by a diagonal (lumped) mass matrix, which can be inverted and stored without increasing asymptotic storage costs.  The high order convergence of SEM is well-studied; see for example \cite{karniadakis2013spectral, kopriva2009implementing, canuto2012spectral}.  A drawback of using tensor-product elements is that the automatic generation of unstructured hexahedral meshes is, at present, infeasible on complex geometries \cite{shepherd2008hexahedral}.  

In contrast to the hexahedral case, there exists theory \cite{shewchuk1998tetrahedral}, algorithms, and software \cite{geuzaine2009gmsh} concerning the robust automatic generation of tetrahedral meshes.  However, extending mass-lumped schemes for time-domain continuous Galerkin methods to triangles and tetrahedra is less straightforward than for tensor product hexahedra.  Chin-Joe-Kong et al.\ \cite{chin1999higher} and Cohen et al.\ \cite{cohen2001higher} explicitly construct high order nodes which enable mass lumping on triangles and tetrahedra.  These sets contain nodes which are distributed topologically (vertex, edges, faces, interior); however, the number of nodes exceeds the cardinality of natural approximation spaces on simplices.  Additionally, such nodal sets have only been constructed up to degree $4$ for tetrahedra.  

A similar approach is taken for \textit{flux reconstruction} schemes on simplices, which are closely related to filtered nodal DG methods \cite{allaneau2011connections, zwanenburg2016equivalence} where nodes are taken to be unisolvent quadrature points \cite{williams2014symmetric, witherden2015identification}.  Unlike nodal sets for mass-lumped simplices, these quadrature points do not contain nodes which lie on the boundary, necessitating an additional interpolation step in the computation of numerical fluxes.  However, numerical evidence indicates that co-locating nodes and quadrature points reduces instabilities resulting from the aliasing of spatially varying Jacobians \cite{witherden2015development}, though an analysis of high order convergence and energy stability for curvilinear simplices are open problems.  

Krivodonova and Berger introduced an inexpensive treatment of curved boundaries for two-dimensional flow problems by modifying the DG formulation on affine triangles \cite{krivodonova2006high}. This was extended to wave propagation problems by Zhang in \cite{zhang2016curved}, and by Zhang and Tan for elements with non-boundary curved faces in \cite{zhang2015simple}.  A theoretical stability and convergence analysis remains to be shown, though numerical results suggest that each of these approaches preserves stability and high order accuracy on curvilinear meshes under the condition that curved triangles are well-approximated by planar triangles.  However, sufficiently large differences between curved and planar triangles still result in unstable schemes \cite{zhang2016curved}.  


An alternative treatment addressing increased storage costs of curvilinear DG was addressed by Warburton using the Low-Storage Curvilinear DG (LSC-DG) method \cite{warburton2010low,warburton2013low}.  Under LSC-DG, the spatial variation of the Jacobian is incorporated into the physical basis functions over each element, resulting in identical mass matrices over each element.  Work in \cite{warburton2013low} also includes \textit{a priori} estimates for projection errors under the LSC-DG basis, and gives sufficient conditions under which convergence is guaranteed.  Furthermore, the DG variational formulation is constructed to be \textit{a priori} stable for surface quadratures with positive weights, allowing for stable under-integration of high order integrands present for curvilinear elements.  

In \cite{chan2016weight}, the weight-adjusted DG (WADG) method was introduced for wave propagation in heterogeneous media.  This WADG formulation results in a low storage method which is both energy stable and provably high order accurate in the presence of smoothly varying material data.  We extend these results to curvilinear meshes and demonstrate several advantages of WADG over LSC-DG.  Theoretical results for WADG in \cite{chan2016weight} are generalized to curvilinear elements, and a computational performance analysis on GPUs is presented.  We restrict ourselves to isoparametric mappings in this work, though the method and theory are readily extended to non-polynomial mappings. 

The structure of the paper-is as follows: section~\ref{sec:wadg_ip} introduces the concept of a weight-adjusted approximation to a weighted $L^2$ inner product, as well as \textit{a priori} estimates for curvilinear elements.  Section~\ref{sec:wadg} introduces a low-storage weight-adjusted DG method (WADG) for curvilinear meshes, along with two energy stable DG formulations.  Section~\ref{sec:num} presents numerical experiments which verify theoretical results and compare the behavior of WADG to LSC-DG. Section~\ref{sec:comp} presents a computational performance analysis of WADG on a single GPU.  Finally, section~\ref{sec:het} describes how to extend the curvilinear WADG method to wave propagation in heterogeneous media in a manner which is both high order accurate and computationally efficient.  

\section{Mathematical notation}
\label{sec:notation}


We assume a domain $\Omega$ which is exactly represented by a triangulation $\Omega_h$ consisting of (possibly curved) elements $D^k$.  We further assume that $D^k$ is the image of a reference element under the mapping 
\[
\bm{x}^k = \bm{\Phi}^k \widehat{\bm{x}},
\]
where $\widehat{\bm{x}} = \LRc{\widehat{x},\widehat{y},\widehat{z}}$ are coordinates on the reference element, $\bm{x}^k = \LRc{x^k,y^k,z^k}$ are physical coordinates on the $k$th element, and $J^k$ is the Jacobian of the transformation $\bm{\Phi}^k$.  

Over each element $D^k \in \Oh$, the solution is approximated from within the space $V_h\LRp{D^k}$
\[
V_h\LRp{D^k} = \bm{\Phi}^k \circ V_h\LRp{\widehat{D}},
\]
where $V_h\LRp{\widehat{D}}$ is an approximation space over the reference element.  The global approximation space is taken to be the direct sum of approximation spaces over each element, 
\[
V_h\LRp{\Omega_h} = \bigoplus_{D^k} V_h\LRp{D^k}.
\]
In this work, $\widehat{D}$ is taken to be the bi-unit quadrilateral $\widehat{D} = [-1,1]^2$ or the bi-unit right triangle, 
\[
\widehat{D} = \LRc{ -1 \leq  \widehat{{x}}, \widehat{y}; \quad \widehat{x}+\widehat{y}\leq 0},
\]
in two dimensions. In three dimensions, $\widehat{D}$ is the bi-unit right tetrahedron,
\[
\widehat{D} = \LRc{ -1 \leq  \widehat{{x}}, \widehat{y}, \widehat{z}; \quad \widehat{x}+\widehat{y}+\widehat{z} \leq -1}.  
\]
On the reference quadrilateral, $V_h\LRp{\widehat{D}}$ is the space of maximum degree $N$ polynomials, 
\[
V_h\LRp{\widehat{D}} = Q^N\LRp{\widehat{D}} = \LRc{ \widehat{x}^i \widehat{y}^j, \quad 0 \leq i,j \leq N}.  
\]
On the reference triangle, $V_h\LRp{\widehat{D}}$ is taken to be the space of total degree $N$ polynomials, 
\[
V_h\LRp{\widehat{D}} = P^N\LRp{\widehat{D}} = \LRc{ \widehat{x}^i \widehat{y}^j, \quad 0 \leq i + j \leq N},
\]
while on the reference tetrahedron, $V_h\LRp{\widehat{D}}$ is taken to be
\[
V_h\LRp{\widehat{D}} = P^N\LRp{\widehat{D}} = \LRc{ \widehat{x}^i \widehat{y}^j \widehat{z}^k, \quad 0 \leq i + j + k \leq N}.
\]
We define $\Pi_N$ as the $L^2$ projection onto $P^N\LRp{\widehat{D}}$ such that
\[
\LRp{\Pi_N u,v}_{L^2\LRp{\widehat{D}}} = \LRp{u,v}_{L^2\LRp{\widehat{D}}}, \qquad v\in P^N\LRp{\widehat{D}},
\]
where $\LRp{\cdot,\cdot}_{L^2\LRp{\widehat{D}}}$ denotes the $L^2$ inner product over the reference element. 

We also introduce standard Lebesgue $L^p$ norms over a general domain $\Omega$,
\begin{align*}
\nor{u}_{L^p\LRp{\Omega}} &= \LRp{\int_{\Omega} u^p}^{1/p} \qquad 1 \leq p < \infty, \\
\nor{u}_{L^{\infty}\LRp{\Omega}} &= \inf\LRc{C \geq 0: \LRb{u\LRp{\bm{x}}} \leq C \quad \forall \bm{x}\in \Omega}.
\end{align*}
Additionally, the $L^p$ Sobolev seminorms and norms of degree $s$ are defined as
\[
\LRb{u}_{W^{s,p}\LRp{\Omega}} = \LRp{\sum_{\LRb{\alpha}= s} \nor{ D^{\alpha} u}_{L^p\LRp{\Omega}}^p}^{1/p}, \qquad \LRb{u}_{W^{s,\infty}\LRp{\Omega}} = \max_{\LRb{\alpha}= s} \nor{D^{\alpha}u}_{L^{\infty}\LRp{\Omega}}, 
\]
and
\[
\nor{u}_{W^{s,p}\LRp{\Omega}} = \LRp{\sum_{\LRb{\alpha}\leq s} \nor{ D^{\alpha} u}_{L^p\LRp{\Omega}}^p}^{1/p}, \qquad \nor{u}_{W^{s,\infty}\LRp{\Omega}} = \max_{\LRb{\alpha}\leq s} \nor{D^{\alpha}u}_{L^{\infty}\LRp{\Omega}},
\]
where $\alpha = \LRc{\alpha_1,\alpha_2,\alpha_3}$ is a multi-index such that
\[
D^{\alpha}u = \pd{^{\alpha_1}}{x^{\alpha_1}}\pd{^{\alpha_2}}{y^{\alpha_2}}\pd{^{\alpha_3}}{z^{\alpha_3}} u.
\]

\section{Weight-adjusted approximations of weighted $L^2$ inner products}
\label{sec:wadg_ip}

In this section, we briefly review weighted $L^2$ inner products and weight-adjusted approximations.  Assuming a bounded positive weight $w$, 
\[
0 < w_{\min} \leq w \leq w_{\max} < \infty, 
\]
it is possible to define a weighted $L^2$ inner product over the reference element $\widehat{D}$,
\begin{equation}
\LRp{u,v}_w \coloneqq \int_{\widehat{D}} uvw.
\label{eq:wip}
\end{equation}
Additionally, we introduce the operator $T_w: L^2\LRp{\widehat{D}} \rightarrow P^N\LRp{\widehat{D}}$ as
\[
T_w u = \Pi_N\LRp{wu}.
\]
It is straightforward to show that $T_w$ is self-adjoint and positive definite \cite{chan2016weight}.  We also define an operator $T_w^{-1}$ which approximates division by $w$
\[
T_w^{-1}: L^2\LRp{\widehat{D}} \rightarrow P^N\LRp{\widehat{D}}, \qquad \LRp{w T_w^{-1}u,v}_{\widehat{D}} = \LRp{u,v}_{\widehat{D}}.
\]
The weight-adjusted approximation to the weighted $L^2$ inner product (\ref{eq:wip}) is then defined as
\[
\LRp{u,v}_{T^{-1}_{1/w}} \coloneqq \LRp{T^{-1}_{1/w}u,v}_{L^2\LRp{\widehat{D}}}.
\]
The accuracy of the above approximation relies on the observation that, under certain conditions, $T_w \approx T_{1/w}^{-1}$.  Estimates for weight-adjusted approximations rely on a estimates for a weighted projection \cite{warburton2013low}:
\begin{theorem}[Theorem 3.1 in \cite{warburton2013low}]
\label{thm:wproj}
Let $D^k$ be a quasi-regular element with representative size $h = {\rm diam}\LRp{D^k}$.  For $N \geq 0$, $w\in W^{N+1,\infty}\LRp{D^k}$, and $u\in W^{N+1,2}\LRp{D^k}$, 
\[
\nor{u - \frac{1}{w} \Pi_N\LRp{{u}{w}}}_{L^2\LRp{D^k}} \leq C h^{N+1}\nor{\frac{1}{\sqrt{J^k}}}_{\LinfDk}\nor{\frac{\sqrt{J^k}}{w}}_{\LinfDk} \nor{w}_{W^{N+1,\infty}\LRp{D^k}}  \nor{u}_{W^{N+1,2}\LRp{D^k}}.
\]
\end{theorem}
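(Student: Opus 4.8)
The plan is to transport the estimate to the reference element $\widehat{D}$, where $\Pi_N$ is the ordinary (unweighted) $L^2$ projection, and then run a Bramble--Hilbert argument on the product $uw$. Write $\widehat{u} = u\circ\bm{\Phi}^k$ and $\widehat{w} = w\circ\bm{\Phi}^k$, and let $\widehat{\Pi}_N$ be the $L^2\LRp{\widehat{D}}$-orthogonal projection onto $P^N\LRp{\widehat{D}}$, so that $\frac{1}{w}\Pi_N\LRp{uw}$ is by construction the push-forward to $D^k$ of $\frac{1}{\widehat{w}}\widehat{\Pi}_N\LRp{\widehat{u}\widehat{w}}$. The key algebraic observation is that, after multiplication by $\widehat{w}$, the weighted projection error becomes an ordinary one: $\widehat{w}\LRp{\widehat{u} - \frac{1}{\widehat{w}}\widehat{\Pi}_N\LRp{\widehat{u}\widehat{w}}} = \LRp{I - \widehat{\Pi}_N}\LRp{\widehat{u}\widehat{w}}$. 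Setting $E \coloneqq u - \frac{1}{w}\Pi_N\LRp{uw}$ and changing variables via $\bm{x} = \bm{\Phi}^k\widehat{\bm{x}}$, so that the Jacobian of the substitution is $J^k$, I would first record
\[
\nor{E}_{L^2\LRp{D^k}}^2 = \int_{\widehat{D}} \frac{J^k}{\widehat{w}^2}\LRs{\LRp{I - \widehat{\Pi}_N}\LRp{\widehat{u}\widehat{w}}}^2\, d\widehat{\bm{x}} \leq \nor{\frac{\sqrt{J^k}}{w}}_{\LinfDk}^2 \,\nor{\LRp{I - \widehat{\Pi}_N}\LRp{\widehat{u}\widehat{w}}}_{L^2\LRp{\widehat{D}}}^2,
\]
where positivity of $w$ (assumed throughout this section) keeps the right-hand side finite.

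Next I would estimate the reference-element factor. Since $\widehat{\Pi}_N$ reproduces $P^N\LRp{\widehat{D}}$ and $\widehat{D}$ is fixed, the Bramble--Hilbert (Deny--Lions) lemma supplies a constant $C = C\LRp{N,\widehat{D}}$ with $\nor{\LRp{I-\widehat{\Pi}_N}\LRp{\widehat{u}\widehat{w}}}_{L^2\LRp{\widehat{D}}} \leq C\,\snor{\widehat{u}\widehat{w}}_{W^{N+1,2}\LRp{\widehat{D}}}$. Using Bramble--Hilbert here rather than the triangle inequality is essential: it replaces the full norm of $\widehat{u}\widehat{w}$, whose order-zero part carries no power of $h$, by the top-order seminorm, which I would expand by the Leibniz rule and bound termwise by H\"older's inequality,
\[
\snor{\widehat{u}\widehat{w}}_{W^{N+1,2}\LRp{\widehat{D}}} \leq C\sum_{j=0}^{N+1}\snor{\widehat{w}}_{W^{j,\infty}\LRp{\widehat{D}}}\,\snor{\widehat{u}}_{W^{N+1-j,2}\LRp{\widehat{D}}}.
\]
Each summand pairs a $j$-th derivative of $\widehat{w}$ with an $\LRp{N+1-j}$-th derivative of $\widehat{u}$, which is precisely the structure that produces a uniform factor $h^{N+1}$ after scaling.

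Finally I would scale back to $D^k$. Because $D^k$ is quasi-regular with $h = {\rm diam}\LRp{D^k}$, the standard chain-rule and change-of-variables scaling estimates for smooth element maps $\bm{\Phi}^k$ give, for $0\leq j\leq N+1$,
\[
\snor{\widehat{w}}_{W^{j,\infty}\LRp{\widehat{D}}} \leq C h^{j}\nor{w}_{W^{j,\infty}\LRp{D^k}}, \qquad \snor{\widehat{u}}_{W^{j,2}\LRp{\widehat{D}}} \leq C h^{j}\nor{\frac{1}{\sqrt{J^k}}}_{\LinfDk}\nor{u}_{W^{j,2}\LRp{D^k}},
\]
the extra factor $\nor{1/\sqrt{J^k}}_{\LinfDk}$ in the second estimate coming from the $1/J^k$ in the inverse change of variables. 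Substituting into the Leibniz bound, every term acquires the factor $h^{j}h^{N+1-j}=h^{N+1}$, so that
\[
\snor{\widehat{u}\widehat{w}}_{W^{N+1,2}\LRp{\widehat{D}}} \leq C h^{N+1}\nor{\frac{1}{\sqrt{J^k}}}_{\LinfDk}\nor{w}_{W^{N+1,\infty}\LRp{D^k}}\nor{u}_{W^{N+1,2}\LRp{D^k}},
\]
and combining this with the displayed inequalities above yields the claim.

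The hard part is the scaling step for genuinely curved $\bm{\Phi}^k$. For affine elements the chain rule is immediate, only seminorms of $u$ and $w$ appear, and the powers of $h$ are textbook; for curved elements the higher derivatives of $\bm{\Phi}^k$, of $\LRp{D\bm{\Phi}^k}^{-1}$, and of $J^k$ enter through Fa\`{a} di Bruno's formula and feed lower-order derivatives of $u$ and $w$ into the bounds, which is exactly why full Sobolev norms (not seminorms) of $u$ and $w$ survive on the right-hand side, and why the hypothesis that $D^k$ be \emph{quasi-regular} is indispensable --- it is precisely the requirement that these geometric quantities be controlled, with the correct powers of $h$, through order $N+1$. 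The remaining ingredients, the change of variables and the Bramble--Hilbert estimate on the fixed reference domain, are routine; this is essentially the route taken in \cite{warburton2013low}.
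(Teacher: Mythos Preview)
The paper does not actually prove this theorem; it is stated without proof and attributed to Theorem~3.1 in \cite{warburton2013low}. Your argument---pull back to the reference element, rewrite the error as $(I-\widehat{\Pi}_N)(\widehat{u}\widehat{w})/\widehat{w}$, apply Bramble--Hilbert on $\widehat{D}$, expand by Leibniz, and scale back using quasi-regularity---is correct and is essentially the route taken in that reference.
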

We assume that $\Omega_h$ is a mesh of quasi-regular elements for the remainder of the paper.  
%

For curved meshes, we approximate the weighted $L^2$ inner product with the weight-adjusted inner product for $w = J$, where $J$ is the Jacobian and we have dropped the superscript by restricting ourselves to an arbitrary element $D^k$.  This results in the following weight-adjusted pseudo-projection problem 
\begin{align}
\LRp{P_Nu ,v}_{T^{-1}_{1/J}} = 
\LRp{T_{1/J}^{-1} P_Nu,v}_{\Lhat} =
\LRp{uJ,v}_{\Lhat}.
\label{eq:pseudoproj}
\end{align}
The solution to \ref{eq:pseudoproj} approximates the true $L^2$ projection, and can be given explicitly as follows: 
\begin{theorem}
The solution to the weight-adjusted pseudo-projection problem (\ref{eq:pseudoproj}) is
\[
P_Nu = \Pi_N\LRp{\frac{1}{J} \Pi_N\LRp{uJ}}.
\]
\end{theorem}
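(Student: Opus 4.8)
The plan is to unwind the two defining relations in succession. The weight-adjusted pseudo-projection is sought as an element $P_Nu\in P^N\LRp{\widehat{D}}$ (just as the $L^2$ projection it approximates), the equation in \eqnref{pseudoproj} being required for all $v\in P^N\LRp{\widehat{D}}$. I would introduce the auxiliary polynomial $q\coloneqq T_{1/J}^{-1}P_Nu\in P^N\LRp{\widehat{D}}$ and extract two independent pieces of information about it.

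First, \eqnref{pseudoproj} itself reads $\LRp{q,v}_{\Lhat}=\LRp{uJ,v}_{\Lhat}$ for all $v\in P^N\LRp{\widehat{D}}$, so $q$ is nothing but $\Pi_N\LRp{uJ}$. Second, the relation $q=T_{1/J}^{-1}P_Nu$ is, by the defining property of $T_w^{-1}$ with weight $w=1/J$, the statement that $\LRp{\frac{1}{J} q,v}_{\Lhat}=\LRp{P_Nu,v}_{\Lhat}$ for all $v\in P^N\LRp{\widehat{D}}$; since $P_Nu\in P^N\LRp{\widehat{D}}$, this says precisely $P_Nu=\Pi_N\LRp{\frac{1}{J} q}$. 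Substituting the value of $q$ found above gives $P_Nu=\Pi_N\LRp{\frac{1}{J}\Pi_N\LRp{uJ}}$, as claimed. That this formula genuinely solves \eqnref{pseudoproj} (rather than merely being a necessary consequence of it) follows by reversing the same two steps, and uniqueness follows from the positive definiteness of $T_{1/J}$ on $P^N\LRp{\widehat{D}}$ recalled from \cite{chan2016weight} — which is exactly what makes $T_{1/J}^{-1}$ well defined in the first place.

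I do not anticipate a genuine obstacle; the only subtlety is keeping track of the fact that the operator in play is $T_w^{-1}$ with $w=1/J$, so that $T_{1/J}^{-1}$, despite its suggestive name, acts in the weak sense above — multiplying by $J$ and then projecting with $\Pi_N$ — rather than as a literal pointwise operation. Once this is kept straight, the proof reduces to the two-line manipulation sketched here.
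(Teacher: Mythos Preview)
Your proposal is correct and follows essentially the same approach as the paper: both arguments unwind the defining properties of $T_{1/J}^{-1}$ and $\Pi_N$ to identify $T_{1/J}^{-1}P_Nu$ with $\Pi_N(uJ)$ and then $P_Nu$ with $\Pi_N\bigl(\tfrac{1}{J}\Pi_N(uJ)\bigr)$. The only cosmetic difference is that you derive the formula as a necessary consequence and then note the steps reverse, whereas the paper starts from the candidate and verifies it directly.
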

\begin{proof}
By the definition of $T_{1/J}^{-1}$ and $\Pi_N$, 
\[
\LRp{\frac{1}{J} T_{1/J}^{-1} \Pi_N\LRp{\frac{1}{J} \Pi_N\LRp{uJ}},v}_{\Lhat} = \LRp{\Pi_N\LRp{\frac{1}{J} \Pi_N\LRp{uJ}},v}_{\Lhat}  = \LRp{{\frac{1}{J} \Pi_N\LRp{uJ}},v}_{\Lhat}.
\]
which implies that 
\[
\LRp{{T_{1/J}^{-1} \Pi_N\LRp{\frac{1}{J} \Pi_N\LRp{uJ}} - \Pi_N\LRp{uJ}},\frac{v}{J}}_{\Lhat} = 0, \qquad \forall v\in P^N\LRp{\Dhat}.
\]
Since $T_{1/J}^{-1} \Pi_N\LRp{\frac{1}{J} \Pi_N\LRp{uJ}}$ and $ \Pi_N\LRp{uJ}$ are both polynomial, by a counting argument
\[
T_{1/J}^{-1} \Pi_N\LRp{\frac{1}{J} \Pi_N\LRp{uJ}}= \Pi_N\LRp{uJ},
\]
which satisfies (\ref{eq:pseudoproj}).
\end{proof}
This also yields the following \textit{a priori} bound: 
\begin{theorem}
\label{lemma:proj}
Let $u \in W^{N+1,2}\LRp{D^k}$.  Then, 
\[
\nor{u - P_N u}_{L^2\LRp{D^k}} \leq CC_J h^{N+1}\nor{u}_{W^{N+1,2}\LRp{D^k}}.
\]
where $C$ is a generic constant independent of $D^k$, and
\[
C_J =  \max\LRc{\nor{\frac{1}{\sqrt{J}}}_{L^{\infty}} \nor{\sqrt{J}}_{L^{\infty}}, \nor{\frac{1}{{J}}}_{L^{\infty}} \nor{J}_{W^{N+1,\infty}\LRp{D^k}} }.
\]
\end{theorem}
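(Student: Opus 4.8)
The plan is to combine the explicit formula $P_N u = \Pi_N\LRp{\frac{1}{J}\Pi_N\LRp{uJ}}$ just established with a triangle inequality that separates the error into a piece controlled by Theorem~\ref{thm:wproj} and a piece controlled by the $L^2$-optimality of $\Pi_N$. I would write
\[
u - P_N u = \LRp{u - \frac{1}{J} \Pi_N\LRp{uJ}} + \LRp{I - \Pi_N}\LRp{\frac{1}{J}\Pi_N\LRp{uJ}}
\]
and bound the two summands separately.

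The first summand is exactly the weighted projection residual $u - \frac{1}{w}\Pi_N\LRp{uw}$ with weight $w = J$, so Theorem~\ref{thm:wproj} applies directly. In the resulting estimate I would use $\nor{\sqrt{J}/J}_{\LinfDk} = \nor{1/\sqrt{J}}_{\LinfDk}$ and $\nor{1/\sqrt{J}}_{\LinfDk}^2 = \nor{1/J}_{\LinfDk}$ to collapse it to $C h^{N+1}\nor{1/J}_{\LinfDk}\nor{J}_{W^{N+1,\infty}\LRp{D^k}}\nor{u}_{W^{N+1,2}\LRp{D^k}}$, which is the first entry in the maximum defining $C_J$. For the second summand, set $g = \frac{1}{J}\Pi_N\LRp{uJ}$. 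Since $\Pi_N$ is the $\Lhat$-orthogonal projection onto $P^N\LRp{\Dhat}$, the map $I - \Pi_N$ is a contraction on $\Lhat$ and annihilates $P^N\LRp{\Dhat}$, whence $\nor{\LRp{I-\Pi_N}g}_{\Lhat} = \nor{\LRp{I-\Pi_N}\LRp{g - q}}_{\Lhat} \le \nor{g - q}_{\Lhat}$ for every $q \in P^N\LRp{\Dhat}$. Choosing $q = \Pi_N u$ and using the triangle inequality once more,
\[
\nor{\LRp{I-\Pi_N}g}_{\Lhat} \le \nor{g - u}_{\Lhat} + \nor{u - \Pi_N u}_{\Lhat};
\]
the first term is the quantity already bounded in the previous step, and the second is the plain $L^2$ projection error, i.e.\ Theorem~\ref{thm:wproj} with $w \equiv 1$ (so that $\nor{w}_{W^{N+1,\infty}} = 1$), giving $C h^{N+1}\nor{1/\sqrt{J}}_{\LinfDk}\nor{\sqrt{J}}_{\LinfDk}\nor{u}_{W^{N+1,2}\LRp{D^k}}$, the second entry in the maximum defining $C_J$. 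Passing between the $\Lhat$ and $L^2\LRp{D^k}$ norms costs only a factor $\LRp{\nor{J}_{\LinfDk}\nor{1/J}_{\LinfDk}}^{1/2}$, which is bounded uniformly over quasi-regular elements and absorbed into the generic constant $C$. Summing the two summands and replacing the two constant groups by their maximum gives the stated bound.

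The one step that is not pure bookkeeping is the second summand: one has to see that the extra outer $\Pi_N$ in the WADG pseudo-projection does not degrade the rate, the point being that $\Pi_N$ is an $L^2$-contraction fixing $P^N\LRp{\Dhat}$, so $\LRp{I-\Pi_N}g$ can be compared with $g - \Pi_N u$ and thereby reduced to two applications of Theorem~\ref{thm:wproj} (with $w = J$ and with $w \equiv 1$). The remaining work — simplifying the Jacobian factors and the reference-to-physical change of variables — is routine once quasi-regularity is used to control $\nor{J}_{\LinfDk}\nor{1/J}_{\LinfDk}$.
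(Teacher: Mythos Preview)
Your argument is correct and uses the same ingredients as the paper's proof: the explicit formula $P_N u = \Pi_N\bigl(\tfrac{1}{J}\Pi_N(uJ)\bigr)$, Theorem~\ref{thm:wproj} with $w=J$, the standard $L^2$ projection estimate, and the $L^2$-contraction property of $\Pi_N$. The only difference is the triangle-inequality split. The paper inserts $\Pi_N u$ rather than $g=\tfrac{1}{J}\Pi_N(uJ)$, writing
\[
u - P_N u = \LRp{u - \Pi_N u} + \Pi_N\LRp{u - \tfrac{1}{J}\Pi_N(uJ)},
\]
and bounds the second piece in one step by $\nor{u - \tfrac{1}{J}\Pi_N(uJ)}_{L^2\LRp{D^k}}$ via the contraction, so each summand produces one entry of $C_J$ directly. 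Your split routes through $\nor{g-\Pi_N u}$ and hence uses the weighted-projection bound twice; this is harmless but slightly less economical. (Incidentally, you have the two entries of $C_J$ labeled in reverse.) Your explicit accounting for the $\Lhat$-versus-$L^2\LRp{D^k}$ norm change is actually more careful than the paper, which simply asserts $\nor{\Pi_N\LRp{u-g}}_{L^2\LRp{D^k}}\le\nor{u-g}_{L^2\LRp{D^k}}$ citing only $\nor{\Pi_N}_{\Lhat}=1$; note that the factor $\LRp{\nor{J}_{L^\infty}\nor{1/J}_{L^\infty}}^{1/2}$ you extract equals $\nor{1/\sqrt{J}}_{L^\infty}\nor{\sqrt{J}}_{L^\infty}$, i.e.\ the first entry of $C_J$, so it can simply be absorbed into $C_J$ without appealing to quasi-regularity.
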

\begin{proof}
The triangle equality gives
\[
\nor{u - P_N u}_{L^2\LRp{D^k}} \leq \nor{u - \Pi_N u}_{L^2\LRp{D^k}}  + \nor{\Pi_Nu - P_N u}_{L^2\LRp{D^k}}.
\]
The former term is bounded by regularity assumptions on $u$,
\[
\nor{u - \Pi_N u}_{L^2\LRp{D^k}} \leq C_1 h^{N+1} \nor{\frac{1}{\sqrt{J}}}_{L^{\infty}} \nor{\sqrt{J}}_{L^{\infty}} \nor{u}_{W^{N+1,2}\LRp{D^k}},
\]
while the latter term is bounded by Theorem~\ref{thm:wproj} for $w = J$,
\begin{align*}
\nor{\Pi_Nu - P_N u}_{L^2\LRp{D^k}} &= \nor{\Pi_N \LRp{u - \frac{1}{J} \Pi_N\LRp{uJ}}}_{L^2\LRp{D^k}} \leq \nor{ {u - \frac{1}{J} \Pi_N\LRp{uJ}}}_{L^2\LRp{D^k}}\\
&\leq C_2 h^{N+1} \nor{\frac{1}{\sqrt{J}}}_{L^{\infty}}^2\nor{J}_{W^{N+1,\infty}\LRp{D^k}} \nor{u}_{W^{N+1,2}\LRp{D^k}}.
\end{align*}
where we have used that $\nor{\Pi_N}_{L^2\LRp{\widehat{D}}}=1$.
\end{proof}
We note that $C_J$ is taken to be the larger of two terms: $\nor{\frac{1}{\sqrt{J}}}_{L^{\infty}} \nor{\sqrt{J}}_{L^{\infty}}$ and $\nor{\frac{1}{{J}}}_{L^{\infty}} \nor{J}_{W^{N+1,\infty}\LRp{D^k}}$.  The former shows up in estimates for $L^2$ projection on curvilinear elements \cite{warburton2013low,michoski2015foundations}, while the latter term illustrates the additional effect of the smoothness of $J$ on the WADG pseudo-projection.  Since (\ref{eq:pseudoproj}) requires approximating $uJ$ by polynomials, the approximation power of $P^N$ is split between $u$ and $J$.  As noted in \cite{warburton2013low}, the dependence of $C_J$ on $\nor{J}_{W^{N+1,\infty}\LRp{D^k}}$ can be interpreted as the ``stealing'' of approximation power from $u$ by $J$.  

Finally, we have the following generalization of Theorem 6 in \cite{chan2016weight} to curvilinear elements: 
\begin{theorem}[Generalization of Theorem 6 in \cite{chan2016weight}]
\label{thm:wmom}
Let $u\in W^{N+1,2}\LRp{D^k}$, $w\in W^{N+1,\infty}\LRp{D^k}$, and $v \in P^M\LRp{D^k}$ for $0 \leq M\leq N$, then
\begin{align*}
\LRb{\LRp{{w}u,v}_{L^2\LRp{D^k}} - \LRp{u,v}_{T_{1/w}^{-1}}} \leq C_{w,J} h^{2N+2 - M} \nor{w}_{W^{N+1,\infty}\LRp{D^k}} \nor{u}_{W^{N+1,2}\LRp{D^k}} \nor{v}_{L^{\infty}\LRp{D^k}},
\end{align*}
where 
\[
C_{w,J} \coloneqq  \nor{\frac{1}{J}}_{\LinfDk} \nor{\sqrt{J}}_{\LinfDk} \nor{\frac{\sqrt{J}}{w}}_{\LinfDk} \nor{w}_{\LinfDk}^2 \nor{\frac{1}{w}}_{\LinfDk}.
\]
\end{theorem}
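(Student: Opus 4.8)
The plan is a duality argument, reducing the error to an $\Lhat$ pairing of a polynomial against a single small function, and then estimating that function by a projection estimate. Abbreviate the left-hand side by $E$ and read every inner product below as the $\Lhat$ inner product of the pulled-back functions (the geometric Jacobian relating $L^2\LRp{D^k}$ and $\Lhat$ is what ultimately brings the $J$-factors into $C_{w,J}$). Since the map $T_{1/w}\colon P^N\LRp{\Dhat}\to P^N\LRp{\Dhat}$, $q\mapsto\Pi_N\LRp{q/w}$, is self-adjoint and positive definite, there is a unique $\psi\in P^N\LRp{\Dhat}$ with $\Pi_N\LRp{\psi/w}=v$; using $v=\Pi_N\LRp{\psi/w}$, self-adjointness of $\Pi_N$, and the defining relation of $T_{1/w}^{-1}$ with test function $\psi$, one gets $\LRp{u,v}_{T_{1/w}^{-1}}=\LRp{w^{-1}T_{1/w}^{-1}u,\psi}=\LRp{u,\psi}$. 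Writing $\psi=w\LRp{v+e}$ with $e\coloneqq w^{-1}\psi-v$, the equation $\Pi_N\LRp{\psi/w}=v$ says precisely that $e\perp P^N\LRp{\Dhat}$, so $\LRp{u,\psi}=\LRp{wu,v}+\LRp{wu,e}$ and hence
\[
E=\snor{\LRp{wu,v}-\LRp{u,\psi}}=\snor{\LRp{wu,e}}=\snor{\LRp{\LRp{I-\Pi_N}\LRp{wu},e}},
\]
the last equality again because $e\perp P^N\LRp{\Dhat}$. It remains to bound the two factors of this pairing.

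For the first factor I would rewrite $\LRp{I-\Pi_N}\LRp{wu}=w\LRp{u-w^{-1}\Pi_N\LRp{wu}}$, move $w$ onto $e$, and apply Theorem~\ref{thm:wproj} with weight $w$:
\[
\snor{\LRp{\LRp{I-\Pi_N}\LRp{wu},e}}=\snor{\LRp{u-w^{-1}\Pi_N\LRp{wu},we}}\le\nor{u-w^{-1}\Pi_N\LRp{wu}}_{\Lhat}\,\nor{w}_{\LinfDk}\,\nor{e}_{\Lhat},
\]
and converting the $\Lhat$-norm to an $L^2\LRp{D^k}$-norm (a factor $\nor{1/\sqrt J}_{\LinfDk}$) and invoking Theorem~\ref{thm:wproj} bounds $\nor{u-w^{-1}\Pi_N\LRp{wu}}_{\Lhat}$ by $Ch^{N+1}\nor{1/J}_{\LinfDk}\nor{\sqrt J/w}_{\LinfDk}\nor{w}_{W^{N+1,\infty}\LRp{D^k}}\nor{u}_{W^{N+1,2}\LRp{D^k}}$.

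The crux is the second factor $\nor{e}_{\Lhat}$, where the superconvergent power $h^{N+1-M}$ must appear. Here I would show that $\psi$ is close to $\Pi_N\LRp{wv}$: a short computation gives $T_{1/w}\LRp{\Pi_N\LRp{wv}}=\Pi_N\LRp{\Pi_N\LRp{wv}/w}=v-r$ with $r\coloneqq\Pi_N\LRp{w^{-1}\LRp{I-\Pi_N}\LRp{wv}}$, hence $\psi=\Pi_N\LRp{wv}+T_{1/w}^{-1}r$, and therefore
\[
e=w^{-1}\psi-v=-w^{-1}\LRp{I-\Pi_N}\LRp{wv}+w^{-1}T_{1/w}^{-1}r.
\]
Using $\nor{I-\Pi_N}_{\Lhat}\le 1$, $\nor{T_{1/w}^{-1}}_{\Lhat}\le\nor{w}_{\LinfDk}$ (which follows from $\LRp{T_{1/w}q,q}=\LRp{q/w,q}\ge\nor{w}_{\LinfDk}^{-1}\nor{q}_{\Lhat}^2$), and $\nor{1/w}_{\LinfDk}$, both terms are controlled by $\nor{\LRp{I-\Pi_N}\LRp{wv}}_{\Lhat}$; the standard polynomial approximation estimate on $D^k$ (as in the proof of Theorem~\ref{lemma:proj}) gives $\nor{\LRp{I-\Pi_N}\LRp{wv}}_{\Lhat}\le Ch^{N+1}\nor{1/\sqrt J}_{\LinfDk}\nor{\sqrt J}_{\LinfDk}\nor{wv}_{W^{N+1,2}\LRp{D^k}}$, and since $v\in P^M\LRp{D^k}$ has vanishing derivatives of order exceeding $M$, the Leibniz rule together with the inverse inequality $\nor{D^\alpha v}_{\LinfDk}\le Ch^{-\snor\alpha}\nor{v}_{\LinfDk}$ yields $\nor{wv}_{W^{N+1,2}\LRp{D^k}}\le Ch^{-M}\nor{w}_{W^{N+1,\infty}\LRp{D^k}}\nor{v}_{\LinfDk}$, the volume factor $\snor{D^k}^{1/2}$ being absorbed into $C$. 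Altogether $\nor{e}_{\Lhat}\le Ch^{N+1-M}\nor{w}_{W^{N+1,\infty}\LRp{D^k}}\nor{v}_{\LinfDk}$ times a product of $L^\infty$-norms of $J$, $1/J$, $\sqrt J$, $w$, $1/w$.

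Multiplying the two factor estimates produces $E\le Ch^{2N+2-M}\nor{w}_{W^{N+1,\infty}\LRp{D^k}}\nor{u}_{W^{N+1,2}\LRp{D^k}}\nor{v}_{\LinfDk}$ times an accumulated product of $L^\infty$-norms, and the only remaining task is to check that this product is dominated by $C_{w,J}=\nor{1/J}_{\LinfDk}\nor{\sqrt J}_{\LinfDk}\nor{\sqrt J/w}_{\LinfDk}\nor{w}_{\LinfDk}^2\nor{1/w}_{\LinfDk}$, the leftover mesh- and dimension-dependent constants going into $C$. The routine parts are the two polynomial-approximation estimates (Theorem~\ref{thm:wproj} and the standard one) and the Leibniz/inverse-inequality bound on $wv$. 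The steps I expect to need the most care are the opening duality identity together with the placement of the orthogonality $e\perp P^N\LRp{\Dhat}$ exactly where it is used, the identification $\psi=\Pi_N\LRp{wv}+T_{1/w}^{-1}r$ that makes $e$ inherit the full $h^{N+1}$ projection error of $wv$ with the extra $h^{-M}$ harvested from the low degree of $v$ (this is the mechanism behind the superconvergent $h^{2N+2-M}$), and the final bookkeeping so that all geometric and weight factors collapse into precisely $C_{w,J}$.
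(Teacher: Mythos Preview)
Your duality approach is sound, and in fact the paper does not supply its own proof here: it simply states that the argument is ``a straightforward generalization of the proof of Theorem~6 in \cite{chan2016weight}''.  So there is no in-paper proof to compare against, and the structural skeleton you propose---introduce $\psi=T_{1/w}^{-1}v$, write $e=w^{-1}\psi-v\perp P^N\LRp{\Dhat}$, reduce $E$ to $\LRb{\LRp{(I-\Pi_N)(wu),e}}$, and then bound the two factors separately with the superconvergent $h^{N+1-M}$ harvested from $\nor{e}$---is a correct way to obtain the $h^{2N+2-M}$ rate.

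The one place where your outline does not close as written is the final constant bookkeeping.  Your first factor, via Theorem~\ref{thm:wproj}, carries an explicit $\nor{w}_{W^{N+1,\infty}\LRp{D^k}}$.  Your second factor, via the Leibniz/inverse-inequality bound $\nor{wv}_{W^{N+1,2}\LRp{D^k}}\le Ch^{-M}\nor{w}_{W^{N+1,\infty}\LRp{D^k}}\nor{v}_{\LinfDk}$, carries a \emph{second} factor of $\nor{w}_{W^{N+1,\infty}\LRp{D^k}}$.  Multiplying therefore yields $\nor{w}_{W^{N+1,\infty}\LRp{D^k}}^{2}$, not the single power that appears in the stated bound, and since $C_{w,J}$ contains only $L^{\infty}$ norms of $w$ and $1/w$, the extra Sobolev factor cannot be absorbed into it.  This is not a flaw in the mechanism of your argument---both projection errors $\nor{(I-\Pi_N)(wu)}$ and $\nor{(I-\Pi_N)(wv)}$ genuinely require $N{+}1$ derivatives of $w$---but it means the sentence ``check that this product is dominated by $C_{w,J}$'' will not go through as written.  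You should either verify against the companion paper whether the stated exponent on $\nor{w}_{W^{N+1,\infty}}$ is a typo, or, if the single power is truly intended, look for a route that avoids invoking a full $W^{N+1}$ estimate twice (e.g., a Bramble--Hilbert argument applied once to the bilinear error functional rather than two separate Cauchy--Schwarz splits).
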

The proof of Theorem~\ref{thm:wmom} is a straightforward generalization of the proof of Theorem 6 in \cite{chan2016weight} to non-affine elements and spatially varying $J$.  Specifying to case of $w = J$ reveals that the constant $C_{w,J}$ reduces to
\[
C_{w,J} = \nor{\sqrt{J}}_{\LinfDk} \nor{\frac{1}{\sqrt{J}}}_{\LinfDk}\nor{J}^2_{\LinfDk} \nor{\frac{1}{J}}_{\LinfDk}^2 = \LRp{\nor{J}_{\LinfDk}\nor{\frac{1}{J}}_{\LinfDk}}^{2.5}.
\]

We note that Theorem~\ref{thm:wmom} is important because local conservation is not preserved under the use of a weight-approximated inner product \cite{chan2016weight}.  Taking $v=1$, Theorem~\ref{thm:wmom} implies that the local conservation error (the difference of the zeroth order moment of weighted and weight-adjusted $L^2$ inner products) converges at a rate of $2N+2$.  It is also straightforward to restore local conservation by projecting $J$ to $P^N\LRp{\widehat{D}}$ (within the weight-adjusted inner product) or through a specific rank one adjustment and the Shermann-Morrison formula \cite{chan2016weight}.  

Theorems~\ref{thm:wproj},~\ref{lemma:proj}, and \ref{thm:wmom} show that the weight-adjusted pseudo-projection over curvilinear elements behaves similarly to the $L^2$ projection under the condition that $J$ satisfies sufficient regularity conditions.  This similarity will be used to construct a weight-adjusted discontinuous Galerkin (WADG) method for meshes with curvilinear elements in the following section.  

\section{A weight-adjusted discontinuous Galerkin (WADG) method for curvilinear meshes}
\label{sec:wadg}
A general semi-discrete DG formulation may be given as
\[
\td{\bm{Q}}{t} = -\bm{M}^{-1}\bm{A}_h \bm{Q}, 
\]
where $\bm{Q}$ are the degrees of freedom for the discrete solution and $\bm{A}_h$ is a matrix resulting from the spatial discretization of a PDE.  Because the approximation space is discontinuous, $\bm{M}$ is block diagonal, with each block corresponding to $\bm{M}_{J^k}$, the local mass matrix (weighted by $J$) over $D^k$, 
\[
\LRp{\bm{M}_{J^k} }_{ij} = \int_{\widehat{D}} \phi_j\phi_i J,
\]
where $\phi_j,\phi_i$ are basis functions over $\widehat{D}$.  For affine elements, $J$ is constant over $D^k$, and each local mass matrix becomes a scaling of the reference mass matrix.  However, for curvilinear elements, each local mass matrix is distinct, and DG implementations must account for the factorization and storage of these inverses within the solver.  Because factorizing or storing local matrices requires $O(N^6)$ storage per-element (as opposed to $O(N^3)$ for the storage of degrees of freedom and geometric information), this greatly increases storage costs as $N$ increases.  

Weight-adjusted DG methods for curvilinear meshes address these storage costs by replacing the exact inversion of each block $\bm{M}_{J^k}^{-1}$ with the weight-adjusted approximation 
\[
\bm{M}_{J^k}^{-1} \approx \widehat{\bm{M}}^{-1} \bm{M}_{1/J^k} \widehat{\bm{M}}^{-1},
\]  
where $\widehat{\bm{M}}$ is the mass matrix over the reference element.  This is equivalent to replacing the global mass matrix $\bm{M}$ with the symmetric positive-definite weight-adjusted mass matrix $\tilde{\bm{M}}$, whose blocks are given by $\tilde{\bm{M}}^k \coloneqq \widehat{\bm{M}} \bm{M}^{-1}_{1/J^k} \widehat{\bm{M}}$.  Each block of the weight-adjusted mass matrix inverse may be applied in a matrix free fashion by assembling $\bm{M}_{1/J^k}$ using quadrature.  The application of the DG right hand side matrix $\bm{A}_h$ may also be applied in a matrix-free manner following standard techniques for isoparametric curved finite elements.  We employ quadrature constructed by Xiao and Gimbutas \cite{xiao2010quadrature} for evaluation of both volume and surface integrals.  

The semi-discrete WADG formulation is energy stable if $\bm{A}_h$ is weakly coercive such that $\bm{u}^T\bm{A}_h \bm{u} \geq 0$.  Multiplying the semi-discrete formulation by $\bm{u}^T\tilde{\bm{M}}$ on both sides then yields $$\frac{1}{2}\td{}{t} \bm{u}^T \tilde{\bm{M}} \bm{u} \leq 0,$$ implying that the $\tilde{\bm{M}}$-norm of the discrete solution does not increase in time.  We note that the precise form of $\tilde{\bm{M}}$ does not matter for energy stability, so long as it is positive definite.\footnote{The energy stability of the semi-discrete formulation also shows up in the derivation of \textit{a priori} semi-discrete error estimates \cite{grote2007interior, warburton2013low, chan2016weight}, where high order accuracy depends on equivalence of the discrete $\tilde{\bm{M}}$-norm with the $L^2$ norm shown in Section~\ref{sec:wadg_ip}.}  This implies that when evaluating the action of $\widehat{\bm{M}}^{-1} \bm{M}_{1/J^k} \widehat{\bm{M}}^{-1}$ in a matrix free manner, it is sufficient to take any quadrature strong enough to ensure that $\bm{M}_{1/J^k}$ is positive-definite.  A quadrature which is exact for polynomials of degree $2N$ is observed to be sufficient in practice for stability, though lower errors are reported for quadratures of degree $2N+1$ \cite{chan2016weight}.  In this work, we utilize quadrature of degree $2N+1$ in applying the inverse of the weight-adjusted mass matrix.  

The LSC-DG method \cite{warburton2013low} addresses storage costs for curvilinear meshes by introducing rational basis functions which incorporate the spatial variation of $J$.  Because terms in the variational formulation involving rational functions are no longer integrable using standard quadrature rules, energy stability requires the use of an \textit{a priori} stable quadrature-based formulation, where stability does not depend on the strength of the quadrature.  In contrast, weight-adjusted DG methods only modify the mass matrix, and can be paired with any basis and stable variational formulation to yield an energy-stable scheme.  

In the following sections, we present two energy stable weight-adjusted discontinuous Galerkin (WADG) formulations for the acoustic wave equation on meshes containing curvilinear elements.  These formulations are equivalent at the continuous level; however, they differ at the discrete level and in terms of computational cost.  

\subsection{Discrete variational formulations}
\label{sec:form}

In this work, we consider the propagation of acoustic waves, though the approaches can be extended to elastic or electromagnetic wave propagation as well.  The acoustic wave equation in first order form is given as 
\begin{align*}
\frac{1}{\rho c^2}\pd{p}{t}{} + \Div \bm{u} &= 0,\\
\rho\pd{\bm{u}}{t}{} + \Grad p &= 0,
\end{align*}
where $t$ is time, $p$ is pressure, $\bm{u}$ is the vector velocity, and $\rho$ and $c$ are density and wavespeed, respectively.  For now, we assume $\rho = 1$ and $c^2$ is constant over the domain $\Omega \in \mathbb{R}^3$, though we will generalize this later.  We additionally assume homogeneous Dirichlet boundary conditions $p=0$ on $\partial \Omega$.  

We introduce definitions of the jump and average of $u \in V_h\LRp{\Omega_h}$.  Let $f$ be a shared face between two elements $D^{k^-}$ and $D^{k^+}$, and let $u$ be a scalar functions.  The jump and average of $u$ are defined as
\[
\jump{u} = u^+ - u^-, \qquad \avg{u} = \frac{u^+ + u^-}{2}. 
\]
Jumps and averages of vector functions are defined in an analogous manner.  For faces $f$ which lie on the boundary $\partial \Omega$, homogeneous Dirichlet boundary conditions are enforced by defining the jumps of $p, \bm{u}$ through
\[
\left.p^+\right|_{f} = -\left.p^-\right|_{f}, \qquad \left.\bm{n}^+\bm{u}^+\right|_{f} = \left.\bm{n}^-\bm{u}^-\right|_{f}.
\]

We introduce first the so-called \textit{strong-weak} discontinuous Galerkin formulation.  Over an element $D^k$, this formulation is given locally as
\begin{align*}
\int_{D^k}\frac{1}{c^2}\pd{p}{t}{} v &= \int_{D^k} \bm{u}\cdot \Grad v - \int_{\partial D^k} \frac{1}{2}\LRp{2\avg{\bm{u}}\cdot \bm{n}^- - \tau_p\jump{p}} v, \\
\int_{D^k}\pd{\bm{u}}{t}{} \cdot \bm{\tau} &= - \int_{D^k} \Grad p \cdot \bm{\tau} - \int_{\partial D^k} \frac{1}{2}\LRp{ \jump{p} - \tau_u\jump{\bm{u}}\cdot\bm{n}^-}\bm{\tau}\cdot \bm{n}^-.
\end{align*}
where $\tau_p, \tau_u \geq 0$ are penalty parameters.  For $\tau_p = \tau_u = 1$, the above numerical flux is equivalent to an upwind flux for isotropic media, while for $\tau_p = \tau_u = 0$, the numerical flux reduces to a central flux.  

The strong-weak formulation is derived by multiplying the acoustic wave system by test functions $(v,\bm{\tau})$, integrating over the domain $\Omega$, then integrating by parts locally on each element $D^k$.  The pressure equation is integrated by parts once, while the velocity equation is integrated by parts twice.  This formulation was used in \cite{warburton2013low,chan2015gpu} to ensure energy stability when using inexact quadrature rules to evaluate the integrands.  Taking $v = p$ and $\bm{\tau} = \bm{u}$, summing up over all elements $D^k$, the volume integrals cancel.  Further rearrangement of flux terms yields
\[
\frac{1}{2} \pd{}{t} \int_{\Omega_h} \LRp{\frac{p^2}{c^2} + \LRb{\bm{u}}^2} \leq -\frac{1}{2}\sum_{f \in \Gamma_h} \int_f \tau_p\jump{p}^2 + \tau_u\LRp{\jump{\bm{u}}\cdot \bm{n}^-}^2,
\]
where $\Gamma_h$ is the collection of unique faces in $\Omega_h$.  This implies that the rate of change of the solution in time is non-increasing, and that for positive $\tau_p,\tau_u$, ``rough'' components of the solution with large jumps are dissipated in time.  We note that this statement still holds if volume and surface integrals are replaced by quadrature approximations, independently of the degree of the quadrature.\footnote{The strength of the quadrature used does impact behavior of the method.  However, as noted in \cite{warburton2013low}, quadrature strength impacts the accuracy but not energy stability.}  

The second formulation we consider is the \textit{strong} discontinuous Galerkin formulation, which is given locally as
\begin{align*}
\int_{D^k} \frac{1}{c^2}\pd{p}{t}{} v &= -\int_{D^k} \Div \bm{u}v - \int_{\partial D^k} \frac{1}{2}\LRp{\jump{\bm{u}}\cdot \bm{n}^- - \jump{p}} v, \\
\int_{D^k}\pd{\bm{u}}{t}{} \cdot \bm{\tau} &= - \int_{D^k} \Grad p \cdot \bm{\tau} - \int_{\partial D^k} \frac{1}{2}\LRp{ \jump{p} - \jump{\bm{u}}\cdot\bm{n}^-}\bm{\tau}\cdot \bm{n}^-.
\end{align*}
Unlike the strong-weak formulation, both equations in the strong DG formulation are integrated by parts twice.  While both formulations are equivalent under exact quadrature, the strong formulation results in a more computationally efficient structure \cite{hesthaven2007nodal}.  However, the stability of the strong formulation depends on the quadrature rules used, as well as the geometric mapping being polynomial.  

For energy stability, it is sufficient for the strong formulation to be equivalent to the strong-weak formulation at the discrete level.  For the acoustic wave equation, this requires that 
\[
-\int_{D^k} \Div \bm{u} v - \int_{\partial D^k} \frac{1}{2}\jump{\bm{u}}\cdot \bm{n}^-  v = \int_{D^k}  \bm{u} \cdot\Grad v - \int_{\partial D^k} \avg{\bm{u}}\cdot \bm{n}^-  v,
\]
which can be achieved by choosing quadrature rules which are exact for the above volume and surface integrands.  Then, integration by parts holds at the discrete level.  Integrands in these volume and surface integrals involve the integration of geometric factors and reference derivatives, such as 
\[
\int_{\widehat{D}} \pd{\bm{u}_1}{x}v  J = \int_{\widehat{D}} \LRp{\pd{\bm{u}_1}{\widehat{x}}\pd{\widehat{x}}{x} J +\pd{\bm{u}_1}{\widehat{y}}\pd{\widehat{y}}{x}  J+\pd{\bm{u}_1}{\widehat{z}}\pd{\widehat{z}}{x}  J} \bm{\tau}_1.
\]
where $\bm{u}_1,\bm{\tau}_1$ are the first components of $\bm{u},\bm{\tau}$. 
The product of the Jacobian with geometric factors is polynomial \cite{hesthaven2007nodal,karniadakis2013spectral}; for example, 
\[
\pd{\widehat{x}}{x}{} J = \pd{y}{\widehat{y}}\pd{z}{\widehat{z}} - \pd{z}{\widehat{y}}\pd{y}{\widehat{z}}.
\]
For an isoparametric mapping, $x, y,z \in P^N\LRp{\widehat{D}}$, implying that the product of geometric factors with Jacobians is degree $2(N-1)$ in three dimensions.  Because $v\in P^N \LRp{\widehat{D}}$, $\pd{\widehat{x}}{x} \in P^{N-1}\LRp{\widehat{D}}$, this requires a quadrature rule over tetrahedra which integrates polynomials of degree $4N-3$ exactly.  

For the surface flux, integrands are of the form
\[
\int_{\widehat{f}} \bm{u} \cdot\bm{n}^- v J^f.
\]
where $\widehat{f}$ is a reference triangular face.  Similar formulas for the product of normals and surface Jacobians imply that $n_x J^f \in P^{2N-2}$ \cite{hesthaven2007nodal, johnen2013geometrical}.  Because $\bm{u},v\in P^N$, integration of surface fluxes requires a quadrature of degree $4N-2$ on a triangular face.  

We note that taking volume and surface quadratures as described above are sufficient conditions for discrete energy stability under isoparametric mappings; however, numerical experiments indicate that these are not always necessary conditions.  For example, we have observed that, for a range of tested meshes, taking quadratures which are exact only for degree $2N+1$ polynomials still result in energy stable systems (all eigenvalues of the discretization matrix have non-positive real part), though there is no theoretical analysis supporting this.  

Finally, because both formulations are energy stable, a semi-discrete error analysis can be derived in a straightforward manner using tools from Section~\ref{sec:wadg_ip} and approaches detailed in \cite{warburton2013low,chan2016weight}.  For brevity, we omit these details in this work.  

\section{Numerical experiments}
\label{sec:num}
In this section, we present numerical experiments which demonstrate the stability and high order convergence of weight-adjusted discontinuous Galerkin methods for curved meshes.  

\subsection{Convergence rates for weight-adjusted $L^2$ projection} 

First we compare the $L^2$ errors and convergence rates obtained using an $L^2$ projection onto a standard polynomial basis, an $L^2$ projection onto a rational LSC-DG basis, and the WADG pseudo-projection onto a polynomial basis for different sequences of non-affine meshes.  Numerical results are presented for quadrilateral meshes with $Q^N$ approximation spaces and exact quadrature.  

\subsubsection{Asymptotically non-affine meshes}
\label{sec:arnold}
We consider a sequence of meshes constructed through self-similar refinement, as shown in Figure~\ref{fig:trapMesh}.  These meshes (which we will refer to such as Arnold-type meshes) are introduced by Arnold, Boffi, and Falk in \cite{arnold2002approximation} to demonstrate the loss of convergence observed for serendipity finite elements under non-affine mappings.  In \cite{warburton2013low}, it is shown that $L^2$ projection errors for the LSC-DG basis stall on such a sequence of meshes, which was attributed to the fact that the LSC-DG projection error is bounded by 
\[
\nor{u - \frac{1}{\sqrt{J}}\Pi_N\LRp{u\sqrt{J}}}_{L^2\LRp{D^k}} \leq C h^{N+1} \nor{\frac{1}{\sqrt{J}}}_{L^\infty\LRp{D^k}}\nor{\sqrt{J}}_{W^{N+1,\infty}\LRp{D^k}}\nor{u}_{W^{N+1,2}\LRp{D^k}}.
\]
For Arnold-type meshes, it can be shown that $J = h^2 + x_1 h$ and that
\[
\nor{\sqrt{J^k}}_{W^{N+1,\infty}\LRp{D^k}} \approx O\LRp{\frac{1}{h^{N+1}}},
\]
which results in an $O(1)$ bound on the LSC-DG projection error, independent of mesh size.  In contrast, as shown in Section~\ref{sec:wadg_ip}, the WADG pseudo-projection error is bounded by 
\[
\nor{u - P_Nu} \leq C h^{N+1} \nor{\frac{1}{J}}_{L^\infty\LRp{D^k}}\nor{J}_{W^{N+1,\infty}\LRp{D^k}}\nor{u}_{W^{N+1,2}\LRp{D^k}}.  
\]
Because $J$ is a linear polynomial, $\nor{J}_{W^{N+1,\infty}\LRp{D^k}} = \nor{J}_{W^{1,\infty}\LRp{D^k}} = O(h)$.  The form of $J$ also implies that $\nor{\frac{1}{J}}_{\LinfDk} = O(1/h^2)$.  Combining these gives an $O(1/h)$ bound, implying that at most a single order of convergence is lost for the WADG pseudo-projection, as exhibited by the numerical results in Figure~\ref{fig:arnoldmeshes}.  

\begin{figure}
\centering
\subfloat[Initial mesh]{
\includegraphics[width=.3\textwidth]{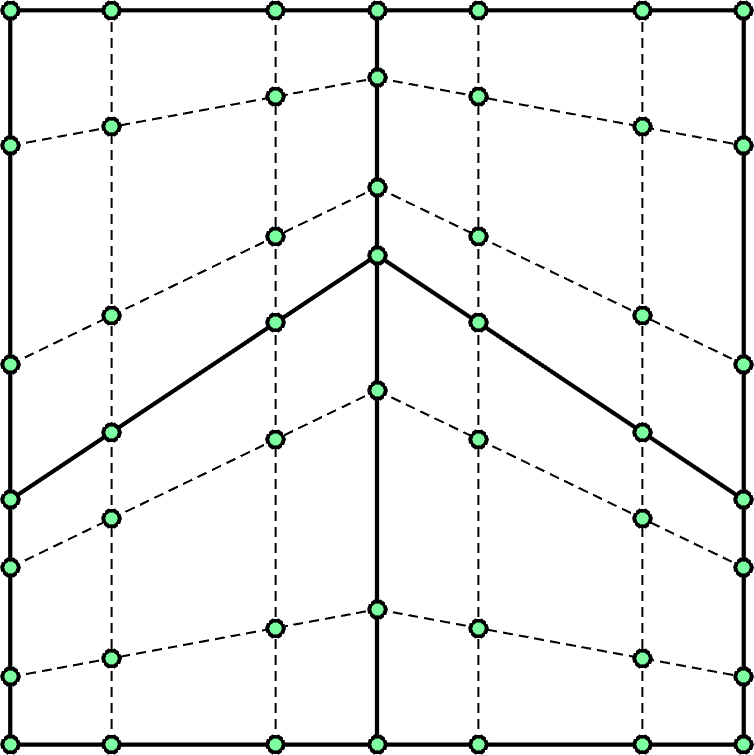}
}
\hspace{4em}
\subfloat[Refined mesh]{
\includegraphics[width=.3\textwidth]{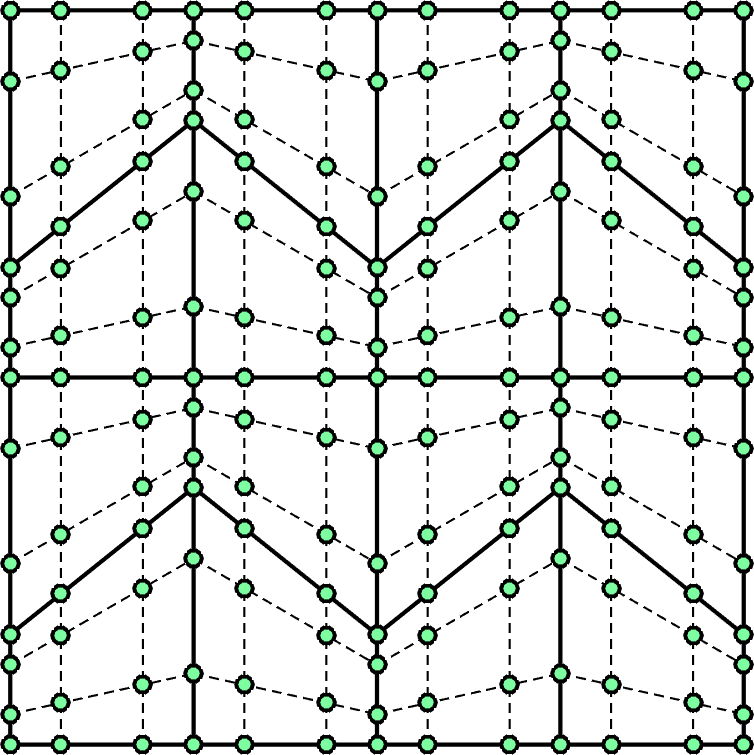}
}
\caption{Asymptotically non-affine Arnold-type meshes used in $h$-convergence studies.  High order nodes are represented by dots.  Meshes are shown for $N=3$.}
\label{fig:trapMesh}
\end{figure}

\begin{figure}
\centering
\subfloat[$N=3$]{
\begin{tikzpicture}
\begin{loglogaxis}[
	legend cell align=left,
	width=.425\textwidth,
    xlabel={Mesh size $h$},
    ylabel={$L^2$ error},
    xmin=.005, xmax=1,
    ymin=1e-13, ymax=1e-1,
    legend pos=south east,
    xmajorgrids=true,
    ymajorgrids=true,
    grid style=dashed,
] 
\addplot[color=blue,mark=*,mark size=3,semithick, mark options={fill=markercolor}]
coordinates{(0.5,0.00145465)(0.25,0.000140693)(0.125,1.17949e-05)(0.0625,8.58138e-07)(0.03125,5.78543e-08)(0.015625,3.75483e-09)};
\addplot[color=black,mark=diamond*,mark size=4,semithick, mark options={fill=markercolor}]
coordinates{(0.5,0.00159198)(0.25,0.000658818)(0.125,0.000619747)(0.0625,0.000677173)(0.03125,0.00073964)(0.015625,0.000782687)};
\addplot[color=red,mark=square*,mark size=3,semithick, mark options={fill=markercolor}]
coordinates{(0.5,0.00148852)(0.25,0.000176325)(0.125,2.60169e-05)(0.0625,3.94602e-06)(0.03125,5.58752e-07)(0.015625,7.47639e-08)};

\logLogSlopeTriangleFlip{0.35}{0.125}{0.55}{3}{red};
\logLogSlopeTriangle{0.35}{0.125}{0.325}{4}{blue};

\legend{$L^2$ projection, LSC-DG, WADG}
\end{loglogaxis}
\end{tikzpicture}
}
\subfloat[$N=4$]{
\begin{tikzpicture}
\begin{loglogaxis}[
	legend cell align=left,
	width=.425\textwidth,
    xlabel={Mesh size $h$},
    ylabel={$L^2$ error},
    xmin=.005, xmax=1,
    ymin=1e-13, ymax=1e-1,
    legend pos=south east,
    xmajorgrids=true,
    ymajorgrids=true,
    grid style=dashed,
] 
\addplot[color=blue,mark=*,mark size=3,semithick, mark options={fill=markercolor}]
coordinates{(0.5,9.41674e-05)(0.25,5.29694e-06)(0.125,2.39575e-07)(0.0625,9.03906e-09)(0.03125,3.10282e-10)(0.015625,1.01611e-11)};
\addplot[color=black,mark=diamond*,mark size=4,semithick, mark options={fill=markercolor}]
coordinates{(0.5,0.000129206)(0.25,8.56669e-05)(0.125,9.71846e-05)(0.0625,0.000117339)(0.03125,0.000133929)(0.015625,0.000144593)};
\addplot[color=red,mark=square*,mark size=3,semithick, mark options={fill=markercolor}]
coordinates{(0.5,9.75639e-05)(0.25,7.3955e-06)(0.125,6.4703e-07)(0.0625,5.22698e-08)(0.03125,3.78512e-09)(0.015625,2.55621e-10)};
\logLogSlopeTriangleFlip{0.325}{0.125}{0.325}{4}{red};
\logLogSlopeTriangle{0.325}{0.125}{0.09}{5}{blue};

\legend{$L^2$ projection, LSC-DG, WADG}
\end{loglogaxis}
\end{tikzpicture}
}
\caption{Errors for WADG pseudo-projection and $L^2$ projection with polynomial and LSC-DG bases on non-affine Arnold-type meshes. } 
\label{fig:arnoldmeshes}
\end{figure}
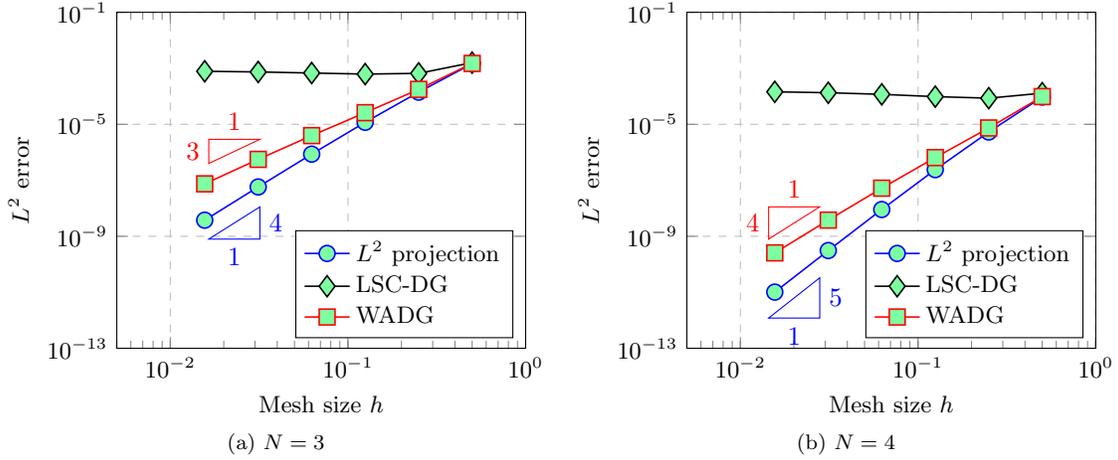

\subsubsection{Randomly perturbed curved meshes}

In this section, we compare the behavior of the WADG pseudo-projection with standard and LSC-DG projection for meshes with randomly generated curved perturbations.   These meshes are constructed by defining an elemental map in terms of nodal coordinates, perturbing these coordinates for non-curved meshes, and generating curvilinear mappings based on the resulting nodal distributions.  Botti noted in \cite{botti2012influence} that $Q^N\LRp{D^k}$ is not necessarily contained in the local finite element approximation space if $D^k$ is the image of a polynomial map with order greater than one.  This implies that, for a sequence of arbitrary curved meshes with $h\rightarrow 0$, optimal rates of convergence will not necessarily be observed.  Sufficient conditions are also described in \cite{warburton2013low, michoski2015foundations} under which optimal $L^2$ convergence rates are expected for non-affine mappings.  

\begin{figure}
\centering
\subfloat[Randomly perturbed curved mesh]{
\label{subfig:randunif1}
\includegraphics[width=.35\textwidth]{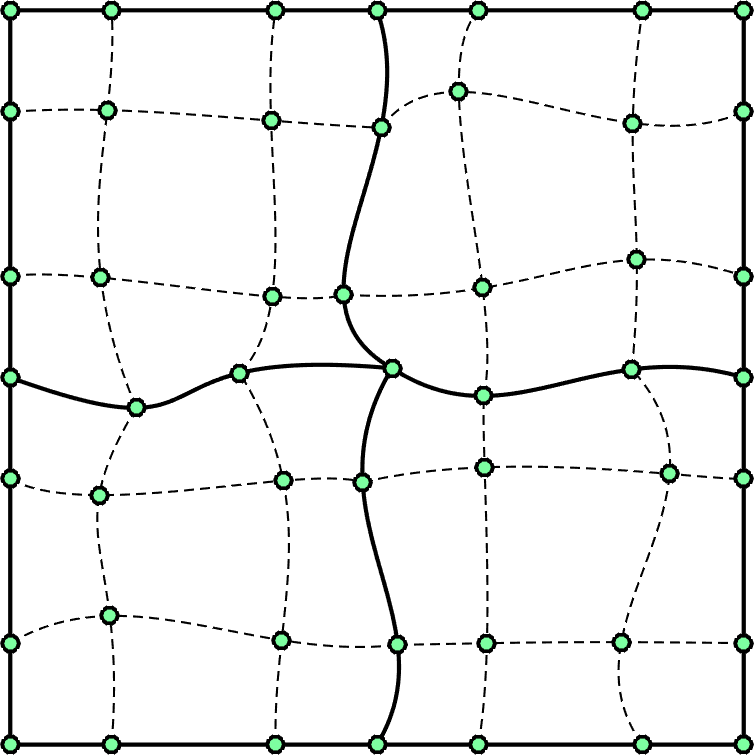}
}
\subfloat[$N=3$]{
\label{subfig:randunif2}
\begin{tikzpicture}
\begin{loglogaxis}[
	legend cell align=left,
	width=.45\textwidth,
    xlabel={Mesh size $h$},
    ylabel={$L^2$ error},
    xmin=.005, xmax=1,
    ymin=1e-7, ymax=1e-1,
    legend pos=south east,
    xmajorgrids=true,
    ymajorgrids=true,
    grid style=dashed,
] 
\addplot[color=blue,mark=*,mark size=3,semithick, mark options={fill=markercolor}]
coordinates{(0.5,0.00943375)(0.25,0.00276313)(0.125,0.000415615)(0.0625,6.56974e-05)(0.03125,8.95162e-06)(0.015625,1.14109e-06)};
\addplot[color=black,mark=diamond*,mark size=4,semithick, mark options={fill=markercolor}]
coordinates{(0.5,0.0134065)(0.25,0.012209)(0.125,0.004612)(0.0625,0.00163555)(0.03125,0.000458367)(0.015625,0.00011822)};
\addplot[color=red,mark=square*,mark size=3,semithick, mark options={fill=markercolor}]
coordinates{(0.5,0.00990795)(0.25,0.00389737)(0.125,0.00064227)(0.0625,0.000123078)(0.03125,1.80455e-05)(0.015625,2.28866e-06)};
\logLogSlopeTriangleFlip{0.35}{0.125}{0.3}{3}{red};
\logLogSlopeTriangle{0.4}{0.125}{0.175}{3}{blue};

\legend{$L^2$ projection, LSC-DG, WADG}
\end{loglogaxis}
\end{tikzpicture}
}

\caption{Curved mesh constructed by randomly perturbing nodal positions for a uniform mesh (\ref{subfig:randunif1}), and errors for WADG pseudo-projection and $L^2$ projection with polynomial and LSC-DG bases for $N=3$ (\ref{subfig:randunif2}). } 
\label{fig:randunif}
\end{figure}

We consider convergence on curved meshes constructed from random perturbations to uniform meshes of quadrilaterals, as shown in Figure~\ref{fig:randunif}.  As predicted,  $L^2$ projection no longer delivers optimal rates of convergence.  In contrast to LSC-DG, the $L^2$ errors for the WADG pseudo-projection converge at the same rate as the $L^2$ projection, as shown in Figure~\ref{fig:randunif}.  

\subsubsection{Asymptotically curved meshes}

Finally, we investigate convergence on isoparametric curved analogues of Arnold-type meshes, as shown in Figure~\ref{fig:randarnold}.  Like the straight-sided Arnold-type meshes in Section~\ref{sec:arnold}, self-similar refinement is used to generate a sequence of meshes with asymptotically non-affine elements.  We assume that the number of elements along both the $x$ and $y$ coordinates is equal to $K_{\rm 1D}$.  Then, for selective elements, the $y$ coordinate of the vertical face is displaced according to $dy$, where 
\[
dy(x) = \frac{\omega}{K_{\rm 1D}+1} \cos\LRp{\frac{K_{\rm 1D}}{2}\pi \LRp{x+1}}
\]
and $\omega \in [0,2]$ is a warping parameter.  The deformation is interpolated at boundary nodes and then smoothly blended into the interior of the element using a linear blending function, as shown in Figure~\ref{fig:meshwarping}.

\begin{figure}
\centering
\subfloat[Initial mesh,  $\omega = 2$]{
\includegraphics[width=.275\textwidth]{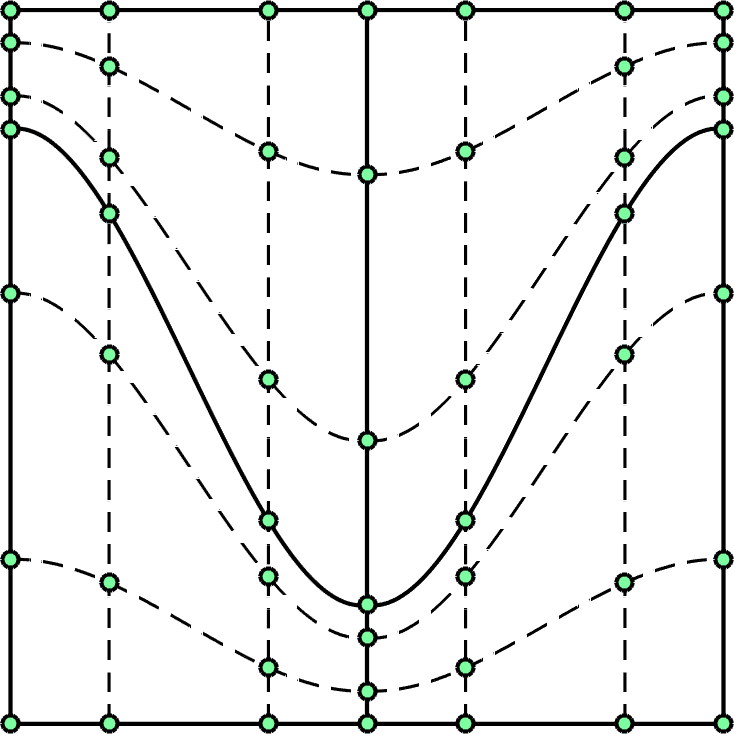}
}
\subfloat[Initial mesh,  $\omega = 1$]{
\includegraphics[width=.275\textwidth]{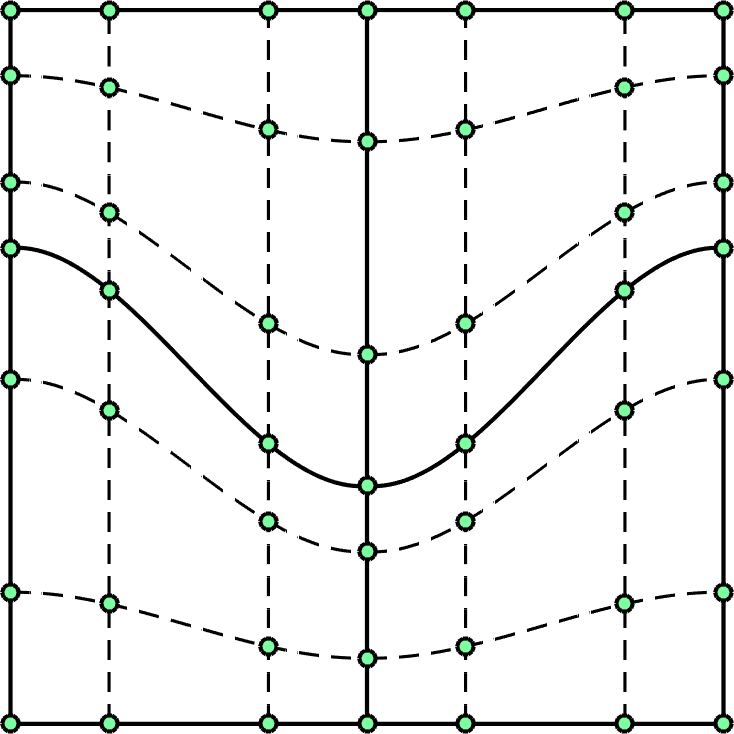}
}
\subfloat[Initial mesh,  $\omega = 1/4$]{
\includegraphics[width=.275\textwidth]{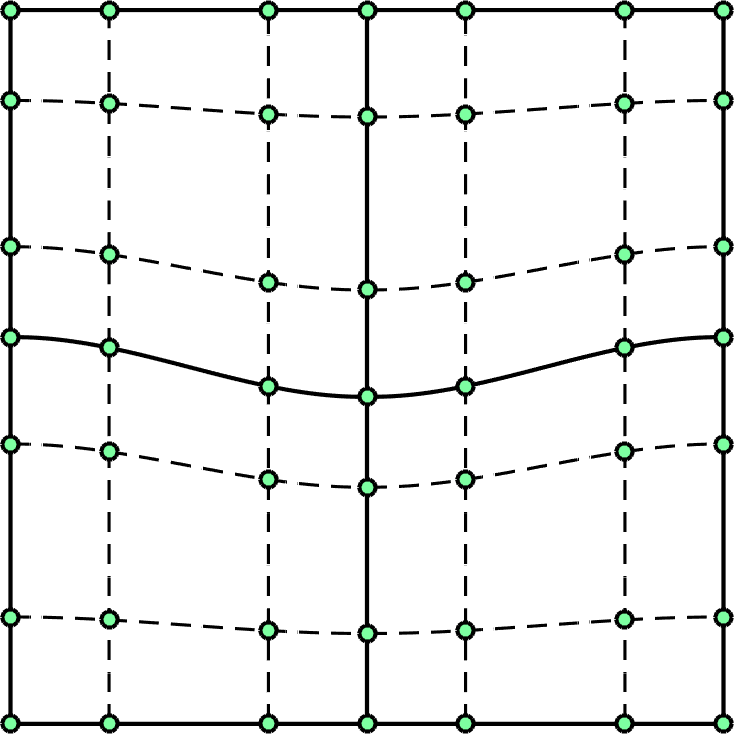}
}
\\
\subfloat[Refined mesh,  $\omega = 2$]{
\includegraphics[width=.275\textwidth]{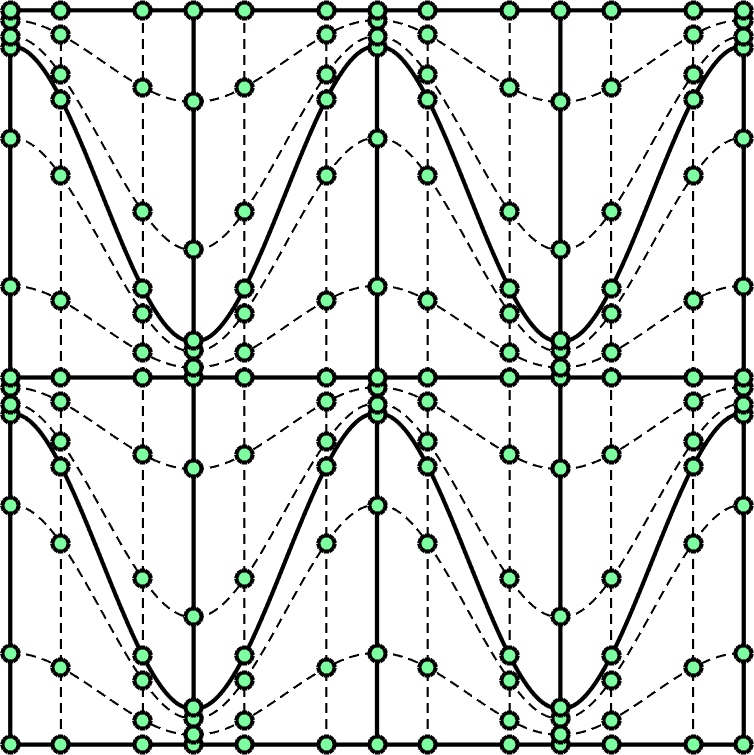}
}
\subfloat[Refined mesh,  $\omega = 1$]{
\includegraphics[width=.275\textwidth]{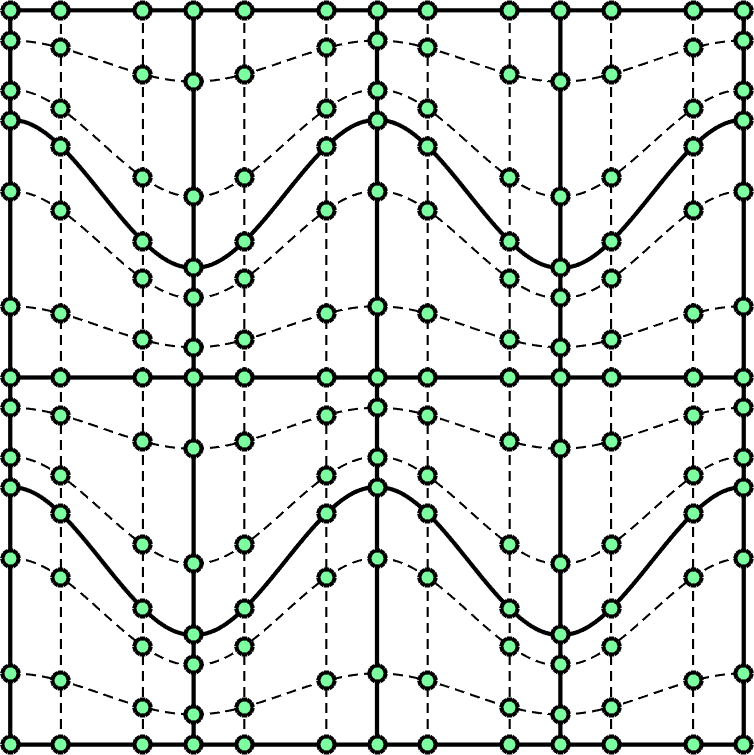}
}
\subfloat[Refined mesh,  $\omega = 1/4$]{
\includegraphics[width=.275\textwidth]{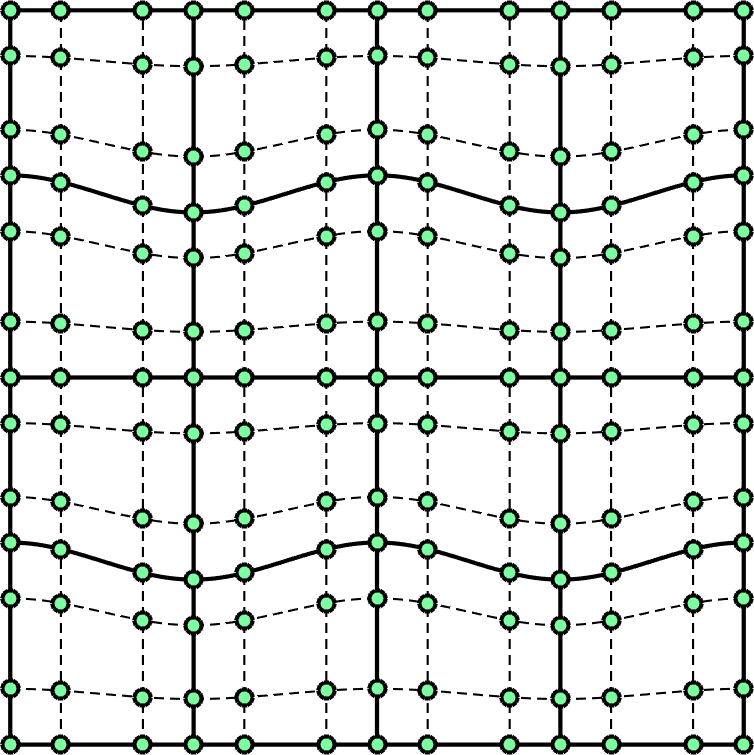}
}
\caption{Curvilinear analogues of Arnold-type meshes for three different warping parameters. }
\label{fig:meshwarping}
\end{figure}

\begin{figure}
\centering
\begin{tikzpicture}
\begin{loglogaxis}[
legend style={font=\tiny},
	legend cell align=left,
	width=.425\textwidth,
    xlabel={Mesh size $h$},
    ylabel={$L^2$ error},
    xmin=.0025, xmax=1,
    ymin=1e2, ymax=1e11,
    legend pos=north east,
    xmajorgrids=true,
    ymajorgrids=true,
    grid style=dashed,
] 
\addplot[color=blue,mark=*,mark size=3,semithick, mark options={fill=markercolor}]
coordinates{(0.5,6099.2)(0.25,83523.1)(0.125,1.26205e+06)(0.0625,1.96906e+07)(0.03125,3.11385e+08)(0.015625,5.02342e+09)};
\addplot[color=black,mark=diamond*,mark size=4,semithick, mark options={fill=markercolor}]
coordinates{(0.5,1590.78)(0.25,13993.8)(0.125,126284)(0.0625,1.094e+06)(0.03125,9.15612e+06)(0.015625,7.50287e+07)};
\addplot[color=red,mark=square*,mark size=3,semithick, mark options={fill=markercolor}]
coordinates{(0.5,361.215)(0.25,2405.62)(0.125,19806.1)(0.0625,164175)(0.03125,1.34192e+06)(0.015625,1.08595e+07)};

\logLogSlopeTriangleNeg{0.5}{0.15}{0.9}{-4}{blue};
\logLogSlopeTriangleFlipNeg{0.45}{0.15}{0.45}{-3}{red};

\node at (axis cs:.006,5e9) {$\omega = 2$};
\node at (axis cs:.006,2e8) {$\omega = 1$};
\node at (axis cs:.006,8e6) {$\omega = 1/4$};

\end{loglogaxis}
\end{tikzpicture}
%
\caption{Growth of $\max_k \nor{\frac{1}{J}}_{\LinfDk}\nor{J}_{W^{N+1,\infty}\LRp{D^k}}$ for $N=3$ curvilinear Arnold-type meshes with various warping parameters $\omega$.}
\label{fig:snorm}
\end{figure}
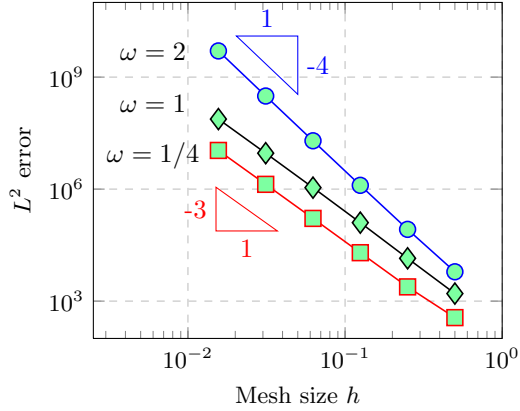

Unlike straight-sided Arnold-type meshes, determining an explicit expression for $J$ under the curved meshes used in Figure~\ref{fig:meshwarping} is more difficult.  However, it is straightforward to compute the the constant in the bound for the $L^2$ error of the WADG pseudo-projection
\[
\tilde{\kappa}_{J} \coloneqq \max_{D^k \in \Oh} \nor{\frac{1}{J}}_{\LinfDk} \nor{J}_{W^{N+1,\infty}\LRp{D^k}},
\]
where the above definition takes the maximum value of this constant over all elements.  Figure~\ref{fig:snorm} shows the growth under mesh refinement of $\tilde{\kappa}_J$ for $N=3$, which is observed to grow at a rate of $O(1/h^{N+1})$ for $\omega = 2$ and $O(1/h^N)$ for $\omega = 1,1/4$. 

Figure~\ref{fig:randarnold} shows that the asymptotic convergence rate of the WADG pseudo-projection is well-predicted by the growth of $\tilde{\kappa}_{J}$.  When $\tilde{\kappa}_J$ grows as $O(1/h^{N+1})$, the bound in Theorem~\ref{lemma:proj} is $O(1)$ irregardless of $h$, indicating that no convergence is possible.   When $\tilde{\kappa}_J$ grows as $O(1/h^{N})$, the bound in Theorem~\ref{lemma:proj} is $O(h)$, suggesting a convergence rate of at most $O(h)$.  Both predicted rates of convergence are observed in numerical experiments.  These numerical experiments also suggest that, while WADG is still sensitive to the geometric mapping, it is less sensitive than existing low-storage methods such as LSC-DG, where the $L^2$ error does not decrease under mesh refinement irregardless of the magnitude of the curvilinear warping $\omega$.  Additionally, the effect of the geometric mapping can be quantified for a given sequence of meshes by explicitly computing the constant in Theorem~\ref{lemma:proj}, as is done in Figure~\ref{fig:snorm}.  

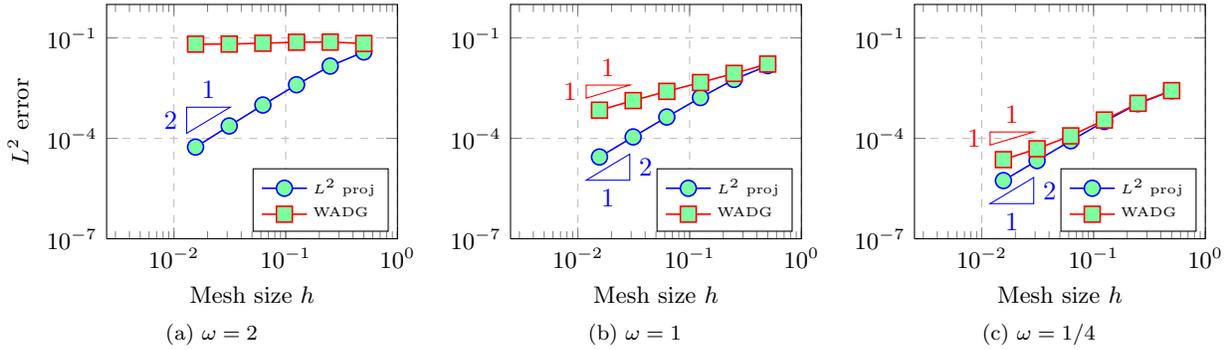
\begin{figure}
\hspace{-1em}
\subfloat[$\omega = 2$]{
\begin{tikzpicture}
\begin{loglogaxis}[
legend style={font=\tiny},
	legend cell align=left,
	width=.33\textwidth,
    xlabel={Mesh size $h$},
    ylabel={$L^2$ error},
    xmin=.0025, xmax=1,
    ymin=1e-7, ymax=1e-0,
    legend pos=south east,
    xmajorgrids=true,
    ymajorgrids=true,
    grid style=dashed,
] 
\addplot[color=blue,mark=*,mark size=3,semithick, mark options={fill=markercolor}]
coordinates{(0.5,0.038232)(0.25,0.0144542)(0.125,0.00400233)(0.0625,0.000991275)(0.03125,0.000234827)(0.015625,5.4896e-05)};
\addplot[color=red,mark=square*,mark size=3,semithick, mark options={fill=markercolor}]
coordinates{(0.5,0.0681969)(0.25,0.0751338)(0.125,0.0744307)(0.0625,0.0695447)(0.03125,0.0656228)(0.015625,0.0653247)};
\logLogSlopeTriangleFlip{0.425}{0.15}{0.45}{2}{blue};


\legend{$L^2$ proj, WADG}
\end{loglogaxis}
\end{tikzpicture}
}
\subfloat[$\omega = 1$]{
\begin{tikzpicture}
\begin{loglogaxis}[
legend style={font=\tiny},
	legend cell align=left,
	width=.33\textwidth,
    xlabel={Mesh size $h$},
    xmin=.0025, xmax=1,
    ymin=1e-7, ymax=1e-0,
    legend pos=south east,
    xmajorgrids=true,
    ymajorgrids=true,
    grid style=dashed,
] 
\addplot[color=blue,mark=*,mark size=3,semithick, mark options={fill=markercolor}]
coordinates{(0.5,0.0149174)(0.25,0.00576658)(0.125,0.00165616)(0.0625,0.000433839)(0.03125,0.000110389)(0.015625,2.78024e-05)};
\addplot[color=red,mark=square*,mark size=3,semithick, mark options={fill=markercolor}]
coordinates{(0.5,0.0165783)(0.25,0.00872368)(0.125,0.00464969)(0.0625,0.00253621)(0.03125,0.00134518)(0.015625,0.000695124)};
\logLogSlopeTriangleFlip{0.41}{0.15}{0.6}{1}{red};
\logLogSlopeTriangle{0.41}{0.15}{0.25}{2}{blue};


\legend{$L^2$ proj, WADG}
\end{loglogaxis}
\end{tikzpicture}
}
\subfloat[$\omega = 1/4$]{
\begin{tikzpicture}
\begin{loglogaxis}[
legend style={font=\tiny},
	legend cell align=left,
	width=.33\textwidth,
    xlabel={Mesh size $h$},
    xmin=.0025, xmax=1,
    ymin=1e-7, ymax=1e-0,
    legend pos=south east,
    xmajorgrids=true,
    ymajorgrids=true,
    grid style=dashed,
] 
\addplot[color=blue,mark=*,mark size=3,semithick, mark options={fill=markercolor}]
coordinates{(0.5,0.00264879)(0.25,0.0010862)(0.125,0.000319333)(0.0625,8.46815e-05)(0.03125,2.16915e-05)(0.015625,5.48248e-06)};
\addplot[color=red,mark=square*,mark size=3,semithick, mark options={fill=markercolor}]
coordinates{(0.5,0.00266576)(0.25,0.00111278)(0.125,0.000352944)(0.0625,0.000118832)(0.03125,4.89236e-05)(0.015625,2.31429e-05)};
\logLogSlopeTriangleFlip{0.41}{0.15}{0.4}{1}{red};
\logLogSlopeTriangle{0.41}{0.15}{0.15}{2}{blue};


\legend{$L^2$ proj, WADG}
\end{loglogaxis}
\end{tikzpicture}
}
\caption{Errors for WADG pseudo-projection and $L^2$ projection for degree $N=3$ approximations on curvilinear Arnold-type meshes at three different warping parameters.  
In all cases, LSC-DG errors stagnate under mesh refinement and are not shown.}
\label{fig:randarnold}
\end{figure}

\subsection{DG for the two-dimensional wave equation}
\label{sec:wave2d}
We next verify that $L^2$ errors for DG on curvilinear meshes converge at appropriate rates for smooth solutions.  We compute $L^2$ errors for the acoustic wave equation over the unit circle (centered at the origin).  The solution is taken to be the rotationally symmetric pressure $p(r,t)$ given by
\[
p(r,t) = J_0(\lambda r) \cos\LRp{\lambda t},
\]
where $J_0$ is zeroth order Bessel function of the first kind, $r = \sqrt{x^2 + y^2}$, and $\lambda$ satisfies Dirichlet boundary conditions $J_0(\lambda) = 0$.  In the following numerical experiments, we take $\lambda = 5.52007811028631$ and compute errors at final time $T=1$.  In this work, the solution is evolved in time using a low-storage 4th order five-stage Runge-Kutta method \cite{carpenter1994fourth}.  

For these 2D experiments, we employ the strong formulation described in Section~\ref{sec:form}, where the quadrature is chosen such that relevant volume and surface integrands are integrated exactly and the formulation is energy stable.  A nested refinement scheme is employed to generate a sequence of meshes, and for each mesh in this sequence the circular domain is approximated using an isoparametric mapping constructed through Gordon-Hall blending of the boundary elements \cite{gordon1973construction, hesthaven2007nodal}.  The meshes and convergence results are shown in Figure~\ref{fig:err2d}, where optimal rates of convergence are observed.  

\begin{figure}
\hspace{-1em}
\subfloat[Initial mesh]{
\includegraphics[width=.29\textwidth]{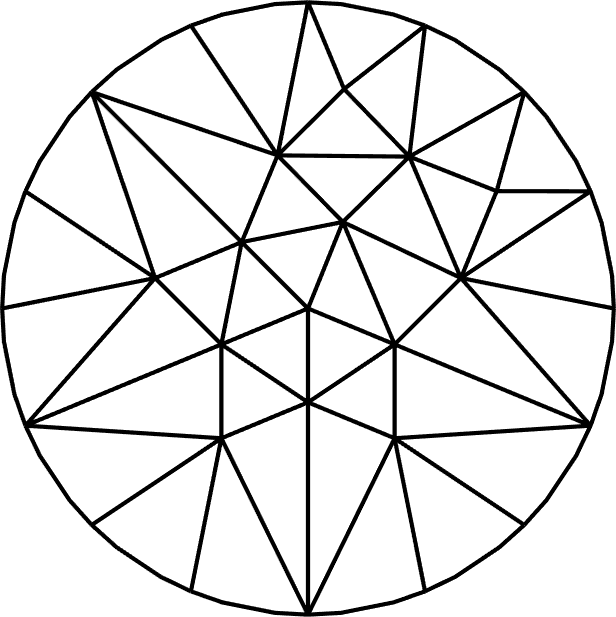}
}
\subfloat[Once-refined mesh]{
\includegraphics[width=.29\textwidth]{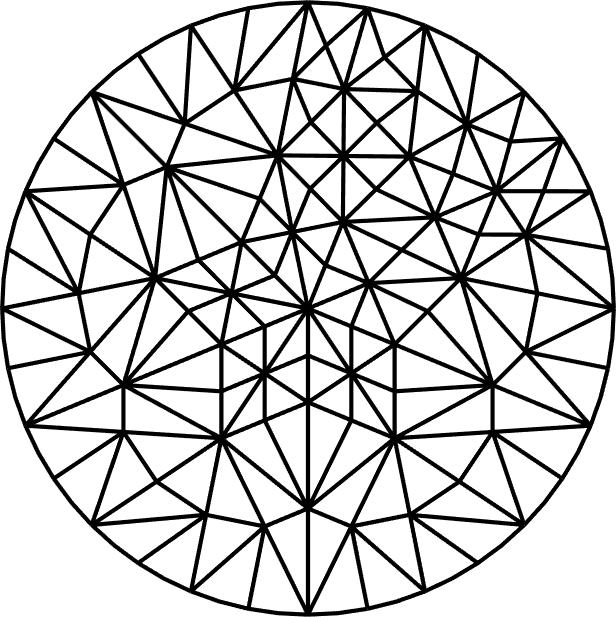}
}
\subfloat[$L^2$ errors under mesh refinement]{
\begin{tikzpicture}
\begin{loglogaxis}[
	legend cell align=left,
	width=.375\textwidth,
    xlabel={Mesh size $h$},
    ylabel={$L^2$ error},
    xmin=.025, xmax=1.1,
    ymin=5e-11, ymax=1e-1,
    legend pos=south east,
    xmajorgrids=true,
    ymajorgrids=true,
    grid style=dashed,
] 
\addplot[color=blue,mark=*,mark size=3,semithick, mark options={fill=markercolor}]
coordinates{(0.320326,0.00791816)(0.160163,0.000348412)(0.0800816,2.3037e-05)(0.0414696,1.36613e-06)};
\addplot[color=blue,mark=otimes*,mark size=3,semithick, mark options={fill=markercolor}]
coordinates{(0.320326,0.00791808)(0.160163,0.000348412)(0.0800816,2.3037e-05)(0.0414696,1.36613e-06)};
\node at (axis cs:.6,.01) {$N=3$};
\logLogSlopeTriangleFlip{0.25}{0.125}{0.52}{4}{black};

\addplot[color=red,mark=*,mark size=3,semithick, mark options={fill=markercolor}]
coordinates{(0.320326,0.000620603)(0.160163,3.0709e-05)(0.0800816,7.71624e-07)(0.0414696,2.41016e-08)};
\addplot[color=red,mark=otimes*,mark size=3,semithick, mark options={fill=markercolor}]
coordinates{(0.320326,0.000620598)(0.160163,3.0709e-05)(0.0800816,7.71624e-07)(0.0414696,2.41016e-08)};
\node at (axis cs:.6,.00125) {$N=4$};
\logLogSlopeTriangleFlip{0.25}{0.1}{0.325}{5}{black};

\addplot[color=black,mark=*,mark size=3,semithick, mark options={fill=markercolor}]
coordinates{(0.320326,0.000202842)(0.160163,2.04859e-06)(0.0800816,3.47404e-08)(0.0414696,5.56032e-10)};
\addplot[color=black,mark=otimes*,mark size=3,semithick, mark options={fill=markercolor}]
coordinates{(0.320326,0.000202852)(0.160163,2.04858e-06)(0.0800816,3.47403e-08)(0.0414696,5.56032e-10)};
\node at (axis cs:.6,.0002) {$N=5$};
\logLogSlopeTriangle{0.3}{0.125}{0.125}{6}{black};

\legend{DG, WADG}
\end{loglogaxis}
\end{tikzpicture}
}
\caption{Convergence of $L^2$ errors for curvilinear DG and WADG on a circular domain, approximated using an isoparametric curvilinear mesh.}
\label{fig:err2d}
\end{figure}

It is shown in Figure~\ref{fig:err2d} that WADG solutions are nearly identical to those obtained by curvilinear DG; however, the computational runtime and storage costs of WADG are more favorable than those of curvilinear DG.  

\subsubsection{Eigenspectra}

We also compare the spectra of the DG time evolution matrix $-\bm{M}^{-1}\bm{A}_h$.  For penalty flux parameters $\tau_p,\tau_u > 0$, there exist eigenvalues of $-\bm{M}^{-1}\bm{A}_h$ with negative real part, reflecting the dissipative nature of the equations.  The numerical flux reduces to a central flux for penalty parameters $\tau_p = \tau_u = 0$, which is non-dissipative in time and results in purely imaginary eigenvalues.  Figure~\ref{fig:eigs} shows the eigenspectra of curvilinear DG and WADG for $N=3$ overlaid on each other.  For both dissipative and non-dissipative fluxes, the difference between DG and WADG eigenvalues agrees to within two digits for large magnitude eigenvalues.  For small eigenvalues, the difference between DG and WADG eigenvalues converges rapidly.  This will be studied in future work.  

\begin{figure}
\centering
\subfloat[Spectra for $\tau = 0$]{
\includegraphics[width=.315\textwidth]{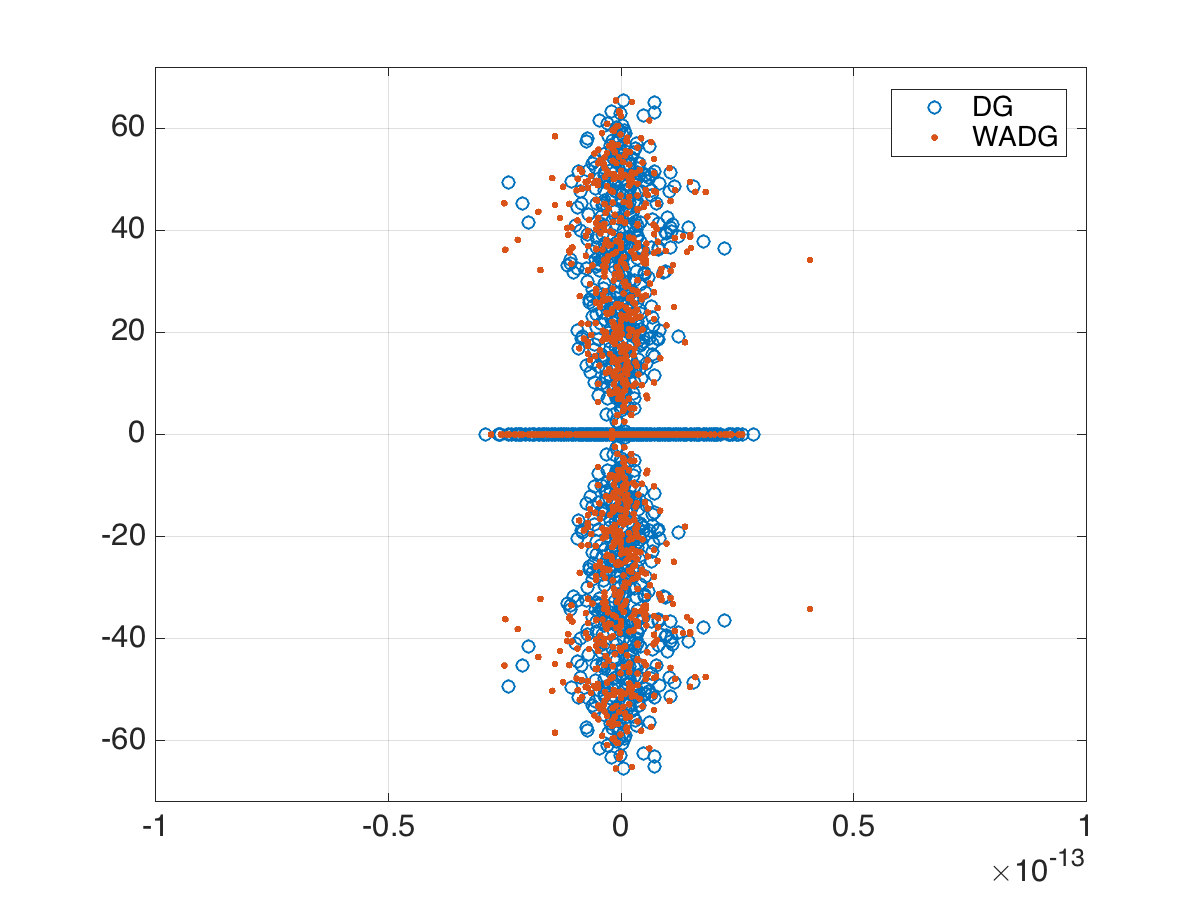}
}
\subfloat[${\rm Im}\LRp{\lambda_i}$ for $\tau = 0$]{
\includegraphics[width=.315\textwidth]{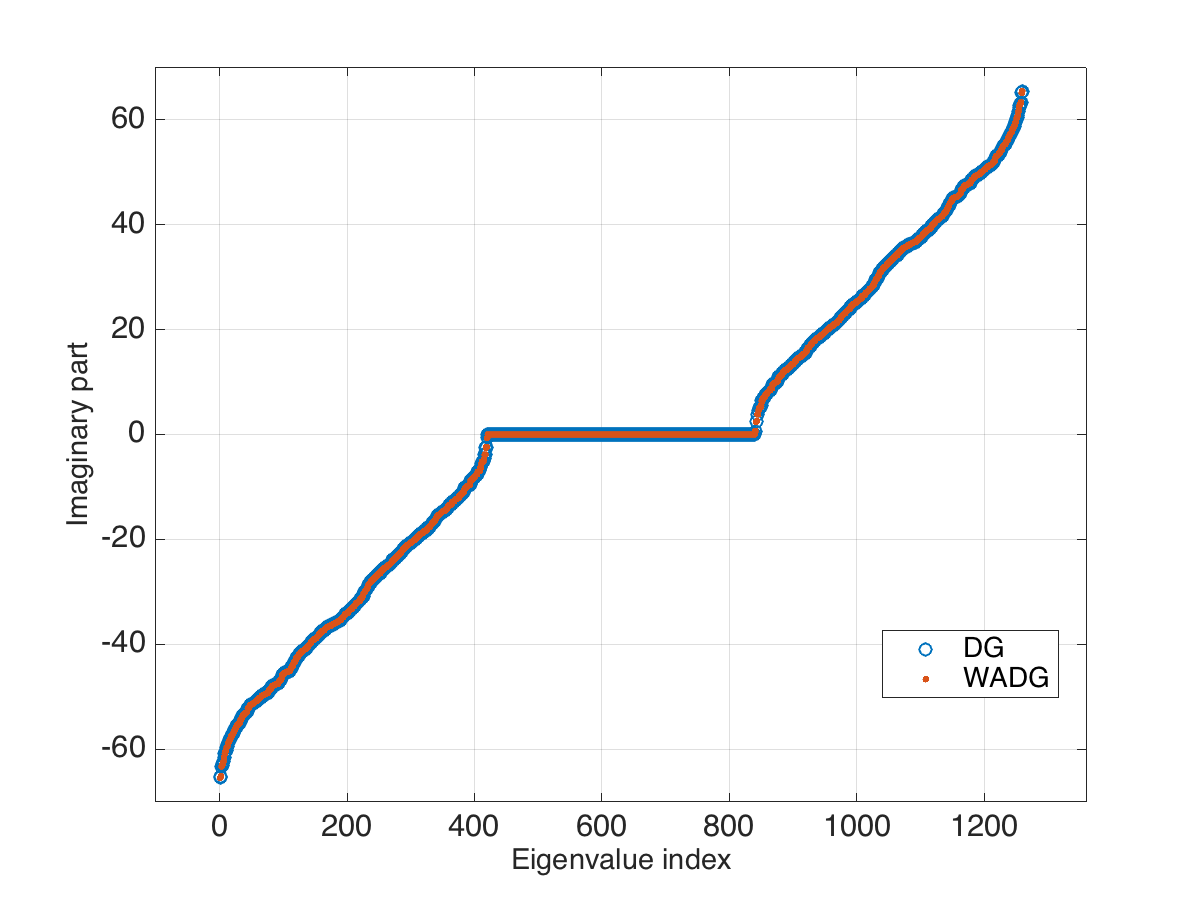}
}
\subfloat[Spectra for $\tau = 1$]{
\includegraphics[width=.315\textwidth]{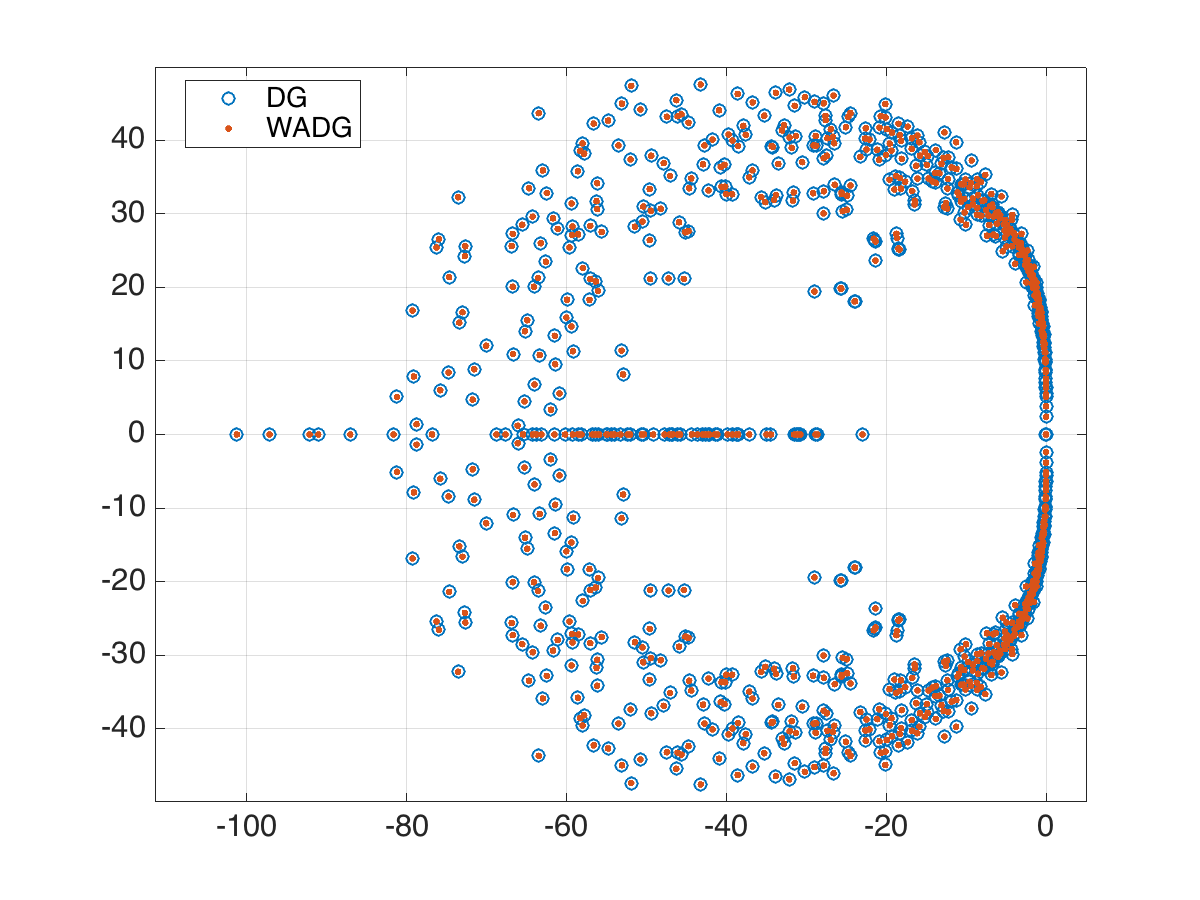}
}
\caption{Spectra of discretized wave equation using DG and WADG for $N=3$.  Eigenvalues under WADG and DG show good agreement for both central ($\tau = 0$) and dissipative ($\tau = 1$) numerical fluxes are shown.}
\label{fig:eigs}
\end{figure}

The results in Section~\ref{sec:wave2d} and Figure~\ref{fig:err2d} show that curvilinear DG and WADG result in very similar $L^2$ errors for a specific solution.  The similarity of the spectra of the curvilinear DG and WADG evolution matrices imply that both schemes possess similar numerical dispersion and dissipation as well.  

\subsection{DG for the three-dimensional wave equation}

Finally, we examine convergence of WADG on three-dimensional curved domains.  We consider the unit sphere (centered at the origin) and the spherically symmetric pressure solution $p(r,t)$ given by
\[
p(r,t) = \frac{\sin\LRp{\pi r}}{\pi r}\cos\LRp{\pi t}
\]
where $r = \sqrt{x^2 + y^2 + z^2}$.  
\begin{figure}
\centering
\subfloat[Planar initial mesh]{
\includegraphics[width=.325\textwidth]{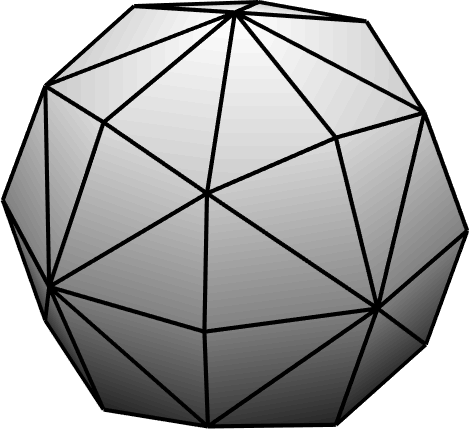}
}
\subfloat[Curved initial mesh]{
\includegraphics[width=.3\textwidth]{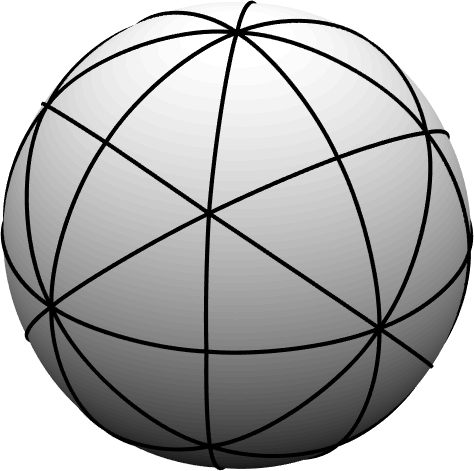}
}
\subfloat[Curved refined mesh]{
\includegraphics[width=.3\textwidth]{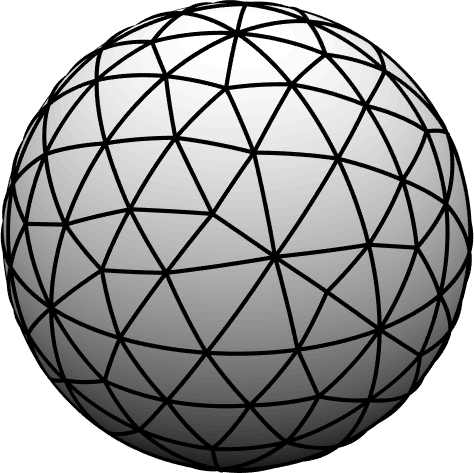}
}
\caption{Planar meshes and the resulting curvilinear meshes produced by Gordon-Hall blending.}  
\label{fig:gh3d}
\end{figure}
A sequence of non-nested meshes are constructed using \textsc{Gmsh}, and on each mesh, Gordon-Hall blending is used to construct smoothed geometric mappings through extensions of edge and face interpolations of the sphere surface, as shown in Figure~\ref{fig:gh3d}.  $L^2$ errors for the strong-weak formulation at time $t=1/4$ are computed and shown in Figure~\ref{fig:err3d}. In all cases, we observe that errors for the strong-weak formulation are virtually identical to errors for the strong form, showing disagreement only in the third significant digit and only on the coarsest mesh.   $L^2$ errors are observed to converge between the optimal rate of $O(h^{N+1})$ and the theoretical rate for DG $O(h^{N+1/2})$ \cite{warburton2013low,chan2016weight}.

\begin{figure}
\centering
\begin{tikzpicture}
\begin{loglogaxis}[
	legend cell align=left,
	width=.5\textwidth,
    xlabel={Mesh size $h$},
    ylabel={$L^2$ error},
    xmin=5e-2, xmax=1.25,
    ymin=1e-9, ymax=.1,        
    legend pos=south east,
    xmajorgrids=true,
    ymajorgrids=true,
    grid style=dashed,
    cycle list name=color list]
] 
\addplot+[color=blue,mark=*,mark options={fill=markercolor},semithick]
coordinates{(1,0.0138761)(0.499567,0.00549824)(0.1682,9.03376e-05)(0.0830612,7.85662e-06)};

\addplot+[color=red,mark=square*,mark options={fill=markercolor},semithick]
coordinates{(1,0.00407636)(0.499567,0.000325115)(0.1682,3.81903e-06)(0.0830612,2.07829e-07)};

\addplot+[color=black,mark=triangle*,mark options={fill=markercolor},semithick]
coordinates{(1,0.000836237)(0.499567,4.83058e-05)(0.1682,1.00745e-07)};

\addplot+[color=magenta,mark=diamond*,mark options={fill=markercolor},semithick]
coordinates{(1,0.000232785)(0.499567,3.13379e-06)(0.1682,7.82233e-09)};

\logLogSlopeTriangleFlip{0.3}{0.15}{0.52}{3}{blue};
\logLogSlopeTriangleFlip{0.3}{0.15}{0.32}{4}{red};
\logLogSlopeTriangleFlip{0.475}{0.125}{0.26}{5}{black};
\logLogSlopeTriangle{0.55}{0.15}{0.1}{5.5}{magenta};


\legend{N=2,N=3, N=4,N=5}
\end{loglogaxis}
\end{tikzpicture}

\caption{Convergence of the $L^2$ error for the WADG strong-weak formulation on a curved sphere.}
\label{fig:err3d}
\end{figure}
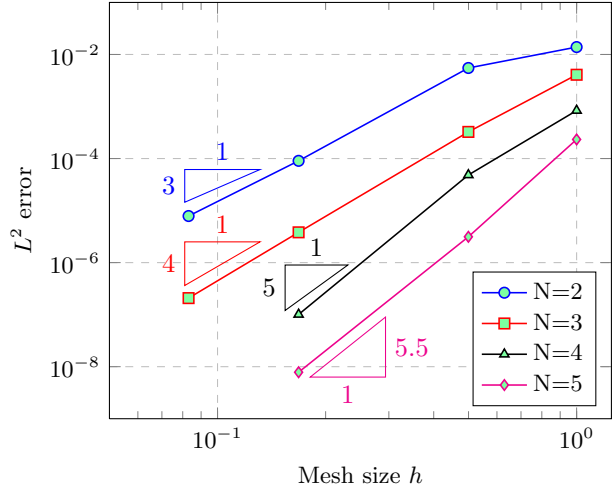

\section{Computational results}
\label{sec:comp}
In this section, we describe an implemention of WADG on Graphics Processing Units (GPUs), along with general optimization strategies to improve performance on many-core architectures.  Computational results quantifying the performance of both the strong and strong-weak WADG formulations are also reported.  

While any basis can be used with WADG, we use a nodal discontinuous Galerkin method in this work, with interpolation points placed at ``Warp and Blend'' nodes \cite{warburton2006explicit}.  Under the structure of a nodal basis, the computation of numerical fluxes and surface contributions from the DG formulation is slightly simplified, though the same structure is achievable under a Bernstein-Bezier basis as well \cite{chan2015bbdg}.  We note that the optimization strategies we describe are intended to be applied only to curvilinear elements.  Because planar elements generally admit a more efficient implementation than curvilinear elements, separate kernels should be written for planar and curvilinear elements in order to reduce runtimes where possible.   Additionally, we note that computational strategies are chosen to target low-to-moderate orders ($N\leq 5$) in this work.  For higher orders, different computational strategies may be necessary.

\subsection{GPU implementation} 

The main steps involved in a time-explicit DG solver are computation of the right hand side (evaluation of the DG formulation) and update of the solution.  The GPU implementation follows \cite{klockner2009nodal}, and is broken up into three main kernels:
\begin{itemize}
\item A \textbf{volume kernel}, which compute contributions from volume integrals within the DG variational form and stores them in global memory.
\item A \textbf{surface kernel}, which computes numerical fluxes and accumulate right hand side contributions from surface integrals within the DG variational form.
\item An \textbf{update kernel}, which applies the weight-adjustment to the DG right hand side and evolves the solution in time.  
\end{itemize}
For the strong-weak formulation, the surface kernel is broken into two smaller kernels to improve performance.  The first kernel interpolates values of the solution to face quadrature points and stores them in global memory, while the latter kernel uses these values and computes surface integral contributions to the strong-weak formulation.  

GPUs possess thousands of computational cores or processing elements, organized into synchronized workgroups.  Because the output for each degree of freedom can be computed independently, implementations of time-explicit DG methods typically assign each degree of freedom to a core, and assign one or more elements to a workgroup.  It was observed in \cite{klockner2009nodal} that assigning only a single element to a workgroup resulted in suboptimal performance at low orders of approximation.  This was remedied by processing multiple elements per-workgroup, and tuning the number of elements processed to maximize computational performance.  The number of elements processed per-workgroup for volume, surface, and update kernels is referred to as the block size.  For all computational results reported, block sizes are optimized to minimize runtime.

\subsection{Update kernel}
\label{sec:update}
In our implementation, the update kernel is the same for both the strong and weak formulations.  We use quadrature which is exact for polynomials of degree $2N+1$, with quadrature weights $\bm{w}_q$.  Let $\bm{V}_q$ be the generalized Vandermonde matrix such that
\[
\LRp{\bm{V}_q}_{ij} = \ell_j(\widehat{\bm{x}}_i),
\]
where $\ell_j$ is the $j$th nodal basis function and $\widehat{\bm{x}}_i$ is the $i$th quadrature point.  
Let $\bm{A}^k_h\bm{Q}$ denote the local right-hand-side contribution from the DG variational formulation; then, over each element $D^k$, the update kernel computes the application of the weight-adjusted mass matrix inverse
\begin{equation}
\LRp{\tilde{\bm{M}}^k}^{-1} \bm{A}^k_h\bm{Q} = \widehat{\bm{M}}^{-1}\bm{M}_{1/J^k}\widehat{\bm{M}}^{-1} \bm{A}^k_h\bm{Q}.
\label{eq:prod}
\end{equation}
We define the matrix $\bm{P}_q$ as
\[
\bm{P}_q \coloneqq \widehat{\bm{M}}^{-1} \bm{V}_q^T \diag{\bm{w}_q},
\]
where $\bm{w}_q$ are quadrature weights.  Then, using only pointwise data and reference matrices $\bm{V}_q$ and $\bm{P}_q$, (\ref{eq:prod}) may be rewritten as 
\begin{equation}
\LRp{\tilde{\bm{M}}^k}^{-1} \bm{A}^k_h\bm{Q} = \bm{P}_q \diag{1 / {\bm{J}^k_q}} \bm{V}_q \widehat{\bm{M}}^{-1}\bm{A}^k_h\bm{Q},
\label{eq:matfree}
\end{equation}
where $\diag{1/\bm{J}^k_q}$ denotes the a diagonal matrix whose entries are the evaluation of $1/J^k$ at quadrature points.  We note that (\ref{eq:matfree}) can be applied in a matrix-free fashion.  
Furthermore, $\widehat{\bm{M}}^{-1}$ is fused into reference matrices used within the volume and surface kernels to reduce the cost of evaluating $\widehat{\bm{M}}^{-1}\bm{A}^k_h$.  

The above procedure involves two matrix-vector products ($\bm{V}_q$ and $\bm{P}_q$) which are performed within the same kernel.  The output of the first matrix vector product must be stored in shared memory, requiring a shared memory array of size $N_q$, where $N_q$ is the number of quadrature points for a degree $2N+1$ quadrature.  

\subsection{Strong formulation}

In this section, we describe the implementation of the volume and surface kernels for the strong formulation
\begin{align*}
\int_{D^k}\frac{1}{c^2}\pd{p}{t}{} v &= -\int_{D^k} \Div \bm{u} v - \int_{\partial D^k} \frac{1}{2}\LRp{\jump{\bm{u}}\cdot \bm{n}^- - \tau_p\jump{p}} v \\
\int_{D^k}\pd{\bm{u}}{t}{} \cdot \bm{\tau} &= - \int_{D^k} \Grad p \cdot \bm{\tau} - \int_{\partial D^k} \frac{1}{2}\LRp{ \jump{p} - \tau_u\jump{\bm{u}}\cdot\bm{n}^-}\bm{\tau}\cdot \bm{n}^-.
\end{align*}
Computation of the above integrals computes the action of the matrix $\bm{A}^k_h$ on a vector.  Because the implementation of the WADG update kernel in Section~\ref{sec:update} requires $\widehat{\bm{M}}^{-1}\bm{A}^k_h$, the inverse of the reference mass matrix will also be premultiplied into reference arrays used in the volume and surface kernels.  

\paragraph{Volume kernel} The volume contributions are relatively straightforward to compute.  We will describe how to compute volume terms for the pressure equation; volume terms for the velocity equations are computed similarly.  For example, we can rewrite the volume term on the pressure equation as
\[
\int_{D^k} \Div \bm{u} v = \int_{\widehat{D}} \LRp{\pd{\bm{u}_1}{x} + \pd{\bm{u}_2}{y} + \pd{\bm{u}_3}{z}} vJ = \int_{\widehat{D}} \Pi_N\LRp{\LRp{\pd{\bm{u}_1}{x} + \pd{\bm{u}_2}{y} + \pd{\bm{u}_3}{z}}J } v.
\]
Let $\bm{U}_{\rm div}$ denote the degrees of freedom for the projection $\Pi_N \LRp{\Div \bm{u} J}$; then, the volume contributions can be represented algebraically as
\[
\widehat{\bm{M}} \bm{U}_{\rm div}.
\]
Multiplying by the inverse of the reference mass matrix $\widehat{\bm{M}}^{-1}$ (resulting from the update step) removes the multiplication by $\widehat{\bm{M}}$ involved in the volume contribution, and all that remains is to compute $\Pi_N \LRp{\Div \bm{u} J}$.  

Quadrature-based projections can be performed as follows: we introduce generalized Vandermonde and projection matrices $\bm{V}_q, \bm{P}_q$ as defined in Section~\ref{sec:update}.  For the strong formulation volume kernel, these matrices must be defined for a degree $4N-3$ quadrature to ensure energy stability.  We also introduce quadrature-based derivative matrices $\bm{V}^{\widehat{x}}_q,\bm{V}^{\widehat{y}}_q,\bm{V}^{\widehat{z}}_q$ such that
\[
\LRp{\bm{V}^{\widehat{x}}_q }_{ij} = \pd{\ell_j}{\widehat{x}}(\bm{x}_i), \qquad i = 1,\ldots,N_q,
\]
where $\ell_j$ is the $j$th basis function and $\bm{x}_i$ is the $i$th quadrature point.  $\bm{V}^{\widehat{y}}_q$ and $\bm{V}^{\widehat{z}}_q$ are defined similarly.  Let $\bm{U}^k$ denote the degrees of freedom for $\bm{u}$ on $D^k$; then, $\Div \bm{u}$ can be evaluated at quadrature points using quadrature-based derivative matrices and geometric factors evaluated at quadrature points.  $\bm{U}_{\rm div}$ can then be computed by scaling these pointwise quadrature values by $\bm{J}_q$ and multiplying by $\bm{P}_q$.  As with the update kernel, this approach requires shared memory arrays of size $N_q$, where $N_q$ is the number of points in a degree $4N-3$ quadrature rule.  We note that this implies that the strong formulation with full integration is only efficient for low $N$; for larger $N$, this results in heavy use of shared memory, which decreases occupancy and results in sub-optimal performance.  

\paragraph{Surface kernel}
The surface kernel computes the pressure and velocity surface contributions to the DG formulation
\[
\int_{\partial D^k} \frac{1}{2}\LRp{\jump{\bm{u}}\cdot \bm{n}^- - \tau_p\jump{p}} v, \qquad 
\int_{\partial D^k} \frac{1}{2}\LRp{ \jump{p} - \tau_u\jump{\bm{u}}\cdot\bm{n}^-}\bm{\tau}\cdot \bm{n}^-.
\]
We will describe the computation of the pressure equation contribution; the surface contributions to the velocity equation are computed similarly.  The integral over $\partial D^k$ can be computed through integrals over each face $f \in \widehat{D}^k$.  Mapping the triangular face to a reference triangle then gives
\[
\int_{\widehat{f}} \frac{1}{2}\LRp{\jump{\bm{u}}\cdot \bm{n}^- - \tau_p\jump{p}} v J^f,
\]
where $J^f$ is the surface Jacobian for the face $f$.  Computing these contributions requires computation of jumps of $p$ and $u$.  Here, we can take advantage of the structure of nodal DG methods: assuming that the element $D^k$ shares a face $f$ with the neighboring element $D^{k,+}$,  the numerical flux over $f$ can be computed at each node on face $f$ using nodal values from $D^k$ and $D^{k,+}$.  Computation of DG surface integral contributions may then be computed through quadrature after interpolating the numerical flux to quadrature points on $f$.  

The remainder of the surface kernel is similar to that of the volume kernel.  Aside from the computation of the numerical flux, two matrix-vector multiplications are required, one to interpolate the numerical flux to face quadrature points, and one to compute the degrees of freedom $\bm{F}_p$ of the projection of the surface integrand.  The first step is done using the Vandermonde matrix $\bm{V}^{\rm tri}_q$ for quadrature points on a triangular face 
\[
\LRp{\bm{V}^{\rm tri}_q}_{ij} = \ell_j^{\rm tri}\LRp{\bm{x}^f_i}, \qquad i = 1,\ldots,N^f_q,
\]
where $\ell_j^{\rm tri}$ is the $j$th nodal basis function on a triangle and $\bm{x}^f_i$ is the $i$th triangular face quadrature point.  $\bm{V}^{\rm tri}_q$ is a linear operator which inputs nodal values on a triangular face and outputs values at face quadrature points.  We note that the face interpolation matrix is identical for each triangular face of a tetrahedron.  

Once the numerical flux is interpolated to face quadrature points, it is scaled by $\bm{J}_q^f$ (the evaluation of the surface Jacobian $J^f$ at face quadrature points) and multiplied by the face projection matrix $\bm{P}^f_q$ to compute the WADG surface contribution, where $\bm{P}^f_q$ is defined as 
\[
\bm{P}^f_q = \widehat{\bm{M}}^{-1} \LRp{\bm{V}^f_q}^T \diag{\bm{w}^f_q}, \qquad \LRp{\bm{V}^f_q }_{ij} = \ell_j \LRp{\bm{x}^f_i}, \qquad i = 1,\ldots,N^f_q,
\]
where $\ell_j$ is the $j$th nodal basis function on the tetrahedron and $\bm{x}^f_i$, $\bm{w}^f_q$ are face quadrature points and weights, respectively.  In practice, we concatenate all tetrahedral face projection matrices into a single surface projection matrix and perform a single matrix-vector multiplication.  

\subsection{Strong-weak formulation}

In this section, we detail the implementation of volume and surface kernels for the strong-weak DG formulation
\begin{align*}
\int_{D^k}\frac{1}{c^2}\pd{p}{t}{} v &= \int_{D^k} \bm{u}\cdot \Grad v - \int_{\partial D^k} \frac{1}{2}\LRp{2\avg{\bm{u}}\cdot \bm{n}^- - \tau_p\jump{p}} v \\
\int_{D^k}\pd{\bm{u}}{t}{} \cdot \bm{\tau} &= - \int_{D^k} \Grad p \cdot \bm{\tau} - \int_{\partial D^k} \frac{1}{2}\LRp{ \jump{p} - \tau_u\jump{\bm{u}}\cdot\bm{n}^-}\bm{\tau}\cdot \bm{n}^-.
\end{align*}

\paragraph{Volume kernel} 

Consider the pressure equation volume contribution for the strong-weak formulation 
\[
\int_{D^k} \bm{u}\cdot \Grad v =  \int_{\widehat{D}} \LRp{\bm{u}_1 \pd{v}{x} + \bm{u}_2 \pd{v}{y} + \bm{u}_3 \pd{v}{z}} J.
\]
This contribution becomes more involved to evaluate, due to the fact that derivatives now lie on the pressure test function $v$.  Algebraic manipulation shows that this contribution can be evaluated as 
\[
\LRp{\bm{V}^{\widehat{x}}_q}^T \bm{U}_q^{\widehat{x}} + \LRp{\bm{V}^{\widehat{y}}_q}^T \bm{U}_q^{\widehat{y}} + \LRp{\bm{V}^{\widehat{z}}_q}^T \bm{U}_q^{\widehat{z}},
\]
where $\LRp{\bm{V}^{\widehat{x}}_q}^T$ are quadrature-based derivative matrices.  Due to the fact that the strong-weak formulation is \textit{a priori} stable, we use a quadrature rule of degree $2N+1$, in contrast to the quadrature rule taken for the strong formulation.  The terms $\bm{U}_q^{\widehat{x}},\bm{U}_q^{\widehat{y}}$, and $\bm{U}_q^{\widehat{z}}$ are defined at quadrature points as
\begin{align*}
\bm{U}_q^{\widehat{x}} &= \diag{\bm{J}_q} \LRp{\diag{\bm{x}_{\widehat{x}}}\bm{V}_q \bm{U}_1 +\diag{\bm{y}_{\widehat{x}}}\bm{V}_q \bm{U}_2 + \diag{\bm{z}_{\widehat{x}}} \bm{V}_q \bm{U}_3},\\
\bm{U}_q^{\widehat{y}} &= \diag{\bm{J}_q} \LRp{\diag{\bm{x}_{\widehat{y}}}\bm{V}_q \bm{U}_1 +\diag{\bm{y}_{\widehat{y}}}\bm{V}_q \bm{U}_2 + \diag{\bm{z}_{\widehat{y}}} \bm{V}_q \bm{U}_3},\\
\bm{U}_q^{\widehat{z}} &= \diag{\bm{J}_q} \LRp{\diag{\bm{x}_{\widehat{z}}}\bm{V}_q \bm{U}_1 +\diag{\bm{y}_{\widehat{z}}}\bm{V}_q \bm{U}_2 + \diag{\bm{z}_{\widehat{z}}} \bm{V}_q \bm{U}_3},
\end{align*}
where $\bm{x}_{\widehat{x}},\ldots$ are evaluations of geometric factors at quadrature points.  
Along with the gradient of $p$ at quadrature points, $\bm{U}_q^{\widehat{x}},\bm{U}_q^{\widehat{y}}$, and $\bm{U}_q^{\widehat{z}}$ need to be present in shared memory in order to apply $\bm{P}_q$ and compute volume contributions.  We loop over quadrature points to reduce shared memory usage in manner similar to that of the update kernel and strong formulation volume kernel. Shared memory usage increases slightly relative to the strong formulation volume kernel, because the strong-weak formulation requires the storage of 6 additional quadrature arrays, while the strong formulation only requires 4 additional arrays.  

In addition to the previously described optimizations, the strong-weak volume kernel packs both $\bm{V}_q$, $\bm{V}^{\widehat{x}}_q$, $\bm{V}^{\widehat{z}}_q$,  $\bm{V}^{\widehat{z}}_q$ and $\bm{P}_q$, $\bm{P}^{\widehat{x}}_q$,  $\bm{P}^{\widehat{z}}_q$,  $\bm{P}^{\widehat{z}}_q$ into single \verb+float4+ (or \verb+double4+) arrays to take advantage of fast memory accesses \cite{chan2015bbdg}. This is observed to result in a significant ($10-20\%$) speedup in runtime of the strong-weak volume kernel.  The same strategy is less effective for the strong formulation volume kernel.  Because the operators $\bm{V}^{\widehat{x}}_q,  \bm{V}^{\widehat{z}}_q,  \bm{V}^{\widehat{z}}_q$ and $\bm{P}_q$ are requested separately, storing them in the same \verb+float4+ array would require two \verb+float4+ reads instead of 4 \verb+float+ reads.  It would only be advantageous to concatenate $\bm{V}^{\widehat{x}}_q,  \bm{V}^{\widehat{z}}_q,  \bm{V}^{\widehat{z}}_q$ into a single \verb+float4+ array, but because only three matrices are required, loading these matrices using \verb+float4+ array also results in one extraneous load, losing any advantage gained by fast \verb+float4+ array accesses.  

\paragraph{Surface kernel}

The surface kernel for the strong-weak formulation computes the following contributions
\[
\int_{\partial D^k} \frac{1}{2}\LRp{2\avg{\bm{u}}\cdot \bm{n}^- - \tau_p\jump{p}} v, \qquad 
\int_{\partial D^k} \frac{1}{2}\LRp{ \jump{p} - \tau_u\jump{\bm{u}}\cdot\bm{n}^-}\bm{\tau}\cdot \bm{n}^-.
\]
We observe that breaking this kernel into two smaller kernels improves performance at higher orders.  The first kernel writes values of the field variables at face quadrature points to global memory.  The second kernel reads in these values, and for each element, retrieves values at neighboring quadrature points and evaluates the numerical flux.  The second kernel also computes strong-weak surface contributions through multiplication of the numerical fluxes by $\bm{P}^f_q$.  


We note that the retrieval of neighboring flux information from $D^{k,+}$ involves non-coalesced data accesses \cite{klockner2009nodal}, and because the number of face (triangular) quadrature points is greater than the number of face nodal points, more non-coalesced accesses are required here than when only communicating nodal values.  Writing face quadrature values to global memory also involves more work for meshes with both curved and planar elements, as it becomes necessary to write out face quadrature values for all neighbors of a curved element, regardless of whether those neighbors are curved or planar.  Transferring nodal data and interpolating locally to surface quadrature (as is done in the strong formulation) avoids this extra step.  

\subsection{Comparison of computational performance}

In this section, we compare the computational performance of kernels for both strong and strong-weak formulations.  Experiments were performed using a Nvidia 980 GTX GPU.  

\subsubsection{Kernel runtimes}

\begin{figure}
\centering
\subfloat[Total runtime]{
\begin{tikzpicture}
\begin{semilogyaxis}[
	legend cell align=left,
	width=.45\textwidth,
	xlabel={Degree $N$},
	xmin=1.5, xmax=5.5,
	xtick={2,3,4,5},
    ylabel={Runtime per-degree of freedom (s)},
    ymin=5e-11, ymax=2e-8,        
    legend pos=south west,
    xmajorgrids=true,
    ymajorgrids=true,
    grid style=dashed,
    cycle list name=color list]
] 
\addplot+[color=blue,mark=*,mark options={fill=markercolor},semithick]
coordinates{(2,6.52e-10)(3,1.2179e-09)(4,2.2906e-09)(5,1.73002e-08)};

\addplot+[color=red,mark=square*,mark options={fill=markercolor},semithick]
coordinates{(2,8.413e-10)(3,1.0929e-09)(4,1.11184e-09)(5,1.47466e-09)};


\legend{Strong, Strong-weak}
\end{semilogyaxis}
\end{tikzpicture}
}
\subfloat[Kernel runtimes]{
\begin{tikzpicture}
\begin{semilogyaxis}[
	legend cell align=left,
	legend style={font=\tiny},	
	width=.45\textwidth,
	xlabel={Degree $N$},
	xmin=1.5, xmax=5.5,
	xtick={2,3,4,5},
    ylabel={Runtime per-degree of freedom (s)},
    ymin=5e-11, ymax=2e-8,        
    legend pos=north west,
    xmajorgrids=true,
    ymajorgrids=true,
    grid style=dashed,
    cycle list name=color list]
] 
\addplot+[color=blue,mark=*,mark options={fill=markercolor},semithick]
coordinates{(2,1.588e-10)(3,5.32e-10)(4,1.313e-09)(5,1.483e-08)};
\addplot+[color=blue,mark=*,mark options={solid,fill=markercolor},semithick,dashed]
coordinates{(2,3.223e-10)(3,4.003e-10)(4,7.079e-10)(5,2.105e-09)};

\addplot+[color=red,mark=square*,mark options={fill=markercolor},semithick]
coordinates{(2,1.754e-10)(3,3.448e-10)(4,4.656e-10)(5,7.611e-10)};

\addplot+[color=black,mark=square*,mark options={solid,fill=markercolor},semithick,dashed]
coordinates{(2,1.652e-10)(3,1.285e-10)(4,9.354e-11)(5,9.656e-11)};
\addplot+[color=red,mark=square*,mark options={solid,fill=markercolor},semithick,dashed]
coordinates{(2,3.298e-10)(3,3.34e-10)(4,2.83e-10)(5,2.518e-10)};

\legend{Volume (Strong), Surface (Strong), Volume (Strong-weak), Surface 1 (Strong-weak), Surface 2 (Strong-weak)}
\end{semilogyaxis}
\end{tikzpicture}
}
\caption{Runtimes per-degree of freedom for energy stable strong and strong-weak formulation kernels at various polynomial degrees $N$.}
\label{fig:runtimes}
\end{figure}
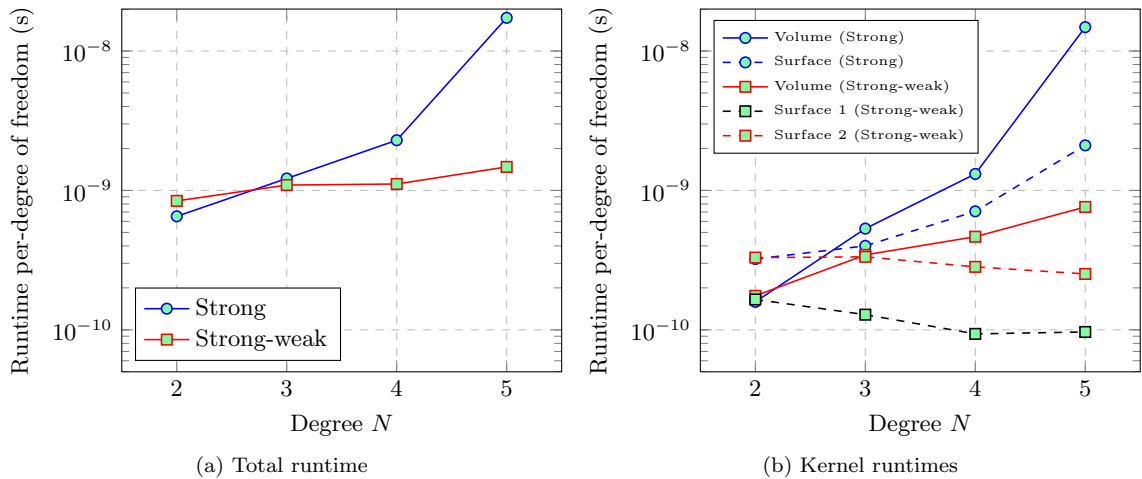

Figure~\ref{fig:runtimes} shows both total runtimes and individual kernel runtimes (reported per-degree of freedom) for both the strong and strong-weak formulations.  For both volume kernels, the runtime per-degree of freedom increases rapidly as $N$ increases.  The runtime for the strong formulation volume kernel increases far more rapidly due to the use of degree $4N-3$ quadrature rules.  At higher orders, the strong formulation volume kernel is also less efficient due to the high number of quadrature points and large amount of shared memory required, which reduces occupancy.\footnote{When pairing the strong formulation with an underintegrated quadrature rule of degree $(2N+1)$ (identical to the strength of quadrature used for the strong-weak formulation), the underintegrated strong form is faster than the strong-weak form for $N \leq 5$.  Numerical experiments suggest that decreasing the strength of quadrature for the strong formulation still results in a discrete formulation which is energy stable (for all tested meshes), and numerical errors for the underintegrated strong formulation are nearly identical to those with full quadrature.  However, energy stability is not theoretically guaranteed for the underintegrated strong formulation.  }

The computation of the strong-weak surface contribution is broken into two separate kernels, the first of which computes and stores (in global memory) the values of solutions at quadrature points on faces. The second kernel reads these values in, computes surface integral contributions using quadrature, and accumulates these contributions in global memory.  Growth in runtime per-degree of freedom as $N$ increases is observed for the strong formulation surface kernel, though the per-degree of freedom runtimes of the strong-weak surface kernel decrease with $N$ up to $N = 5$.  Results in \cite{modave2016gpu} indicate that this is pre-asymptotic behavior, and that costs again increase for $N$ sufficiently large.  

A similar growth in $N$ of the per-degree of freedom runtimes for both volume and surface kernels is also observed for planar tetrahedra at sufficiently high orders \cite{modave2016gpu}.  Per-degree of freedom runtimes for the strong-weak surface kernels are also observed to decrease with $N$  for small $N$; however, asymptotic costs indicate that as the order is increased further, per-degree of freedom runtimes will increase in a similar manner for the volume and surface kernels of both formulations.  Several options exist to reduce this growth in runtime for higher $N$.  Modave et al.\ utilized optimized dense linear algebra libraries such as CUBLAS \cite{modave2016gpu}, which showed significant decreases in runtime for $N > 7$.  An element-per-thread data layout combined with blocking of dense elemental matrices also yields more efficient behavior at larger $N$ without requiring additional global memory \cite{chan2015bbdg}, though this approach results in slightly larger runtimes than using CUBLAS.  An alternative approach to reducing runtime for high $N$ was taken in \cite{chan2015bbdg}, where a change of polynomial basis resulted in sparse matrices and a roughly constant runtime per-degree of freedom.  However, this strategy is currently restricted to non-curved tetrahedral elements.  


\subsubsection{Profiled GFLOPS and bandwidth}

\pgfplotstableread[col sep=space]{
N V S U
2                   150.237                   147.311                   137.614
3                   73.1056                   121.633                    72.458
4                   38.0127                   66.6634                    84.952
5                  12.60117                     22.47                    61.802
}\BWStrong

\pgfplotstableread[col sep=space]{
N V S U
2 790.931989924433 528.699968973006 367.173785839672          
3 907.236842105263 791.906070447165 458.464635854342          
4 918.920683277119 767.340019776805 871.470946554373          
5 550.348653659875 389.616559212759 1060.02337271163
}\GFLOPSStrong

Due to the high degree of quadrature required for energy stability, the strong formulation is more expensive at any order $N > 2$.  For this reason, we focus in this section on performance results for the strong-weak formulation.  Figure~\ref{fig:strongweak} shows profiled GFLOPS per-second and bandwidth for the strong-weak formulation kernels.  As $N$ increases, the bandwidth decrease, which is observed for nodal DG as well \cite{modave2016gpu, chan2015bbdg}.  In contrast to other kernels, the profiled computational performance of the strong-weak surface kernel remains relatively high for larger $N$.  This is likely due to the splitting of the strong-weak surface kernel into to two kernels, which more efficiently utilize GPU bandwidth.  We note that adopting this split kernel strategy for the strong-weak volume kernel (as is done in \cite{chan2015gpu}) and update kernel may improve runtime, GFLOPS, and bandwidth at higher $N$, though for $N \leq 5$, this split kernel approach was observed to result in a greater overall runtime than using monolithic volume and update kernels.  

\pgfplotstableread[col sep=space]{
N V S F 
                         2                   135.172                   159.619                    99.098
                         3                    74.142                   154.132                    137.86
                         4                   52.1763                   132.888                   136.989
                         5                   32.8774                   135.096                   125.908
}\BWSkew

\pgfplotstableread[col sep=space]{
N V S F
                         2          562.428734321551          227.171249952398          203.389830508475
                         3          586.173143851508          419.977795875644           466.92607003891
                         4          770.787309769269           588.95225880893           696.41711719967
                         5          771.961869099236          943.268688966019          869.925434962717
 }\GFLOPSSkew

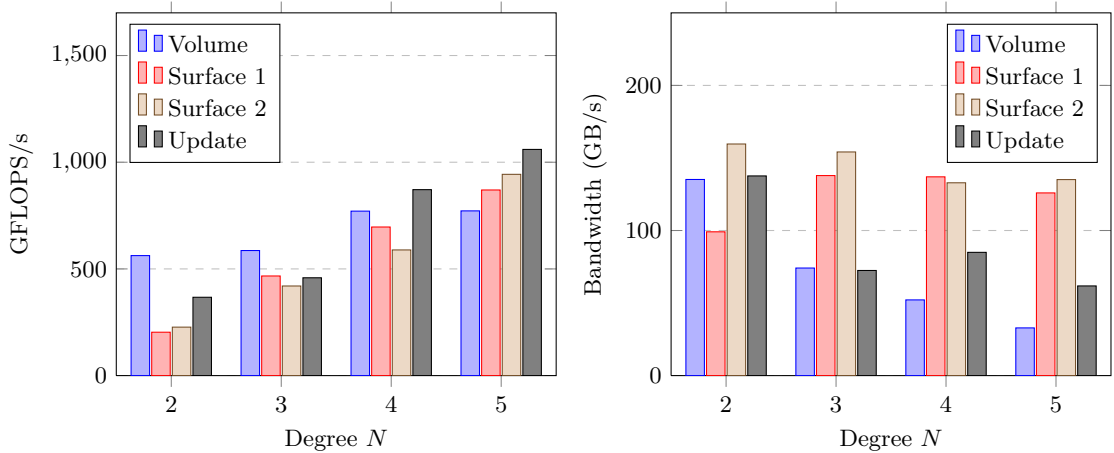
\begin{figure}
\centering
\subfloat{
\begin{tikzpicture}
\begin{axis}[
	width=.45\textwidth,
	legend cell align=left,
	xlabel={Degree $N$},
	ylabel={GFLOPS/s},
	xmin=1.5, xmax=5.5,
	ymin=0,ymax=1700,
        ybar=2*\pgflinewidth,
    bar width=7pt,
	xtick={2,3,4,5},
	legend pos=north west,
	ymajorgrids=true,
	grid style=dashed,
] 
\addplot table[x=N, y=V] from \GFLOPSSkew;
\addplot table[x=N, y=F] from \GFLOPSSkew;
\addplot table[x=N, y=S] from \GFLOPSSkew;
\addplot table[x=N, y=U] from \GFLOPSStrong;
\legend{Volume, Surface 1, Surface 2,Update}
\end{axis}
\end{tikzpicture}
}
\subfloat{
\begin{tikzpicture}
\begin{axis}[
	width=.45\textwidth,
	legend cell align=left,
	xlabel={Degree $N$},
	ylabel={Bandwidth (GB/s)},
	xmin=1.5, xmax=5.5,
	ymin=0,ymax=250,
             ybar=2*\pgflinewidth,
             bar width=7pt,
	xtick={2,3,4,5},
	legend pos=north east,
	ymajorgrids=true,
	grid style=dashed,
] 
\addplot table[x=N, y=V] from \BWSkew;
\addplot table[x=N, y=F] from \BWSkew;
\addplot table[x=N, y=S] from \BWSkew;
\addplot table[x=N, y=U] from \BWStrong;
\legend{Volume, Surface 1, Surface 2, Update}
\end{axis}
\end{tikzpicture}
}
\caption{Profiled GFLOPS and bandwidth for strong-weak DG formulation volume and surface kernels.  Results are presented for an Nvidia GTX 980 GPU, on a spherical mesh of 49748 elements.  }
\label{fig:strongweak}
\end{figure}

\section{Heterogeneous media in curved domains}
\label{sec:het}
A convenient aspect of WADG is that it is possible to incorporate the modeling of heterogeneous media on curvilinear elements at no additional computational cost.  Assuming now that $c^2$ varies spatially, this results in a different weighting of the mass matrix for the pressure equation
\[
\LRp{\bm{M}_{J^k/c^2} }_{ij} = \int_{\widehat{D}} \ell_j\ell_i \frac{J}{c^2}.  
\]
The inverse of this mass matrix can again be approximated using a weight-adjusted mass matrix
\[
\LRp{\bm{M}_{J^k/c^2} }^{-1} \approx \widehat{\bm{M}}^{-1} \bm{M}_{c^2/J^k} \widehat{\bm{M}}^{-1}. 
\]
Recall that, for curvilinear elements in isotropic media, the update kernel computes the following matrix-vector product
\[
\bm{P}_q {\rm diag}\LRp{1 / {\bm{J}^k_q}}\bm{V}_q \widehat{\bm{M}}^{-1}\bm{A}^k_h\bm{Q},
\]
where $\bm{A}^k_h\bm{Q}$ is the contribution from the DG variational formulation.  Incorporating spatial variation of $c^2$ into WADG simply involves modifying the update kernel to compute 
\[
\bm{P}_q {\rm diag}\LRp{\bm{c}^2_q / {\bm{J}^k_q}}\bm{V}_q \widehat{\bm{M}}^{-1}\bm{A}^k_h\bm{Q},
\]
where $\bm{c}^2_q$ contains the evaluation of $c^2$ at quadrature points.  

Figure~\ref{fig:het} shows an example of a Gaussian pulse (placed away from the origin) propagating through a spherical domain with radially varying wavespeed.  The simulation was performed using the strong-weak DG formulation on 83762 elements of degree $N=4$.  For computational efficiency, different volume and surface kernels were used depending whether an element was planar or curved.  The same WADG update kernel was used for all elements in order to simultaneously accommodate spatial variation of $c^2$ everywhere in the domain and variation of $J$ over curvilinear elements.  
  \begin{figure}
  \centering
  \subfloat[Wavespeed $c^2$]{\label{subfig:c2}\includegraphics[width=.29\textwidth]{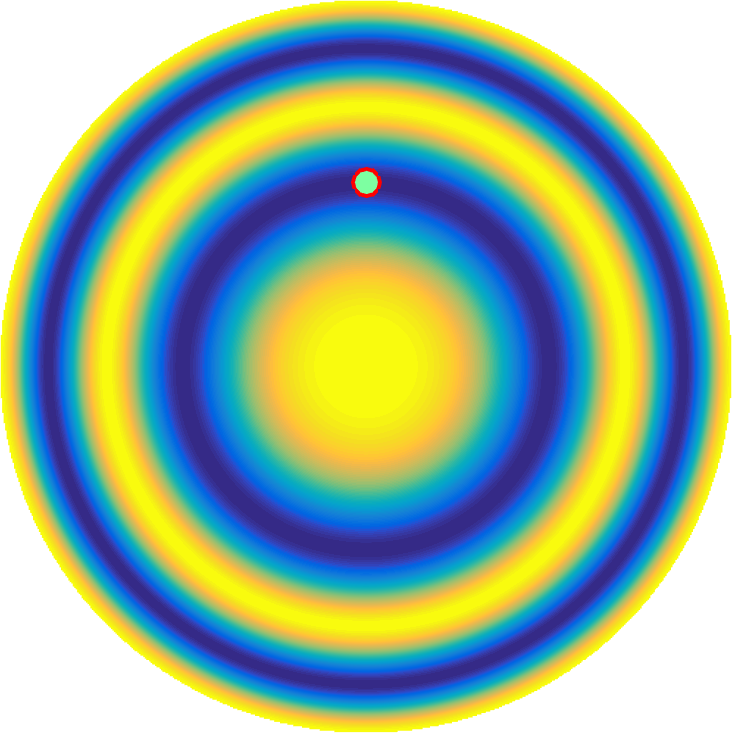}}
  \subfloat[Pressure at $t\approx .27$]{\includegraphics[width=.35\textwidth]{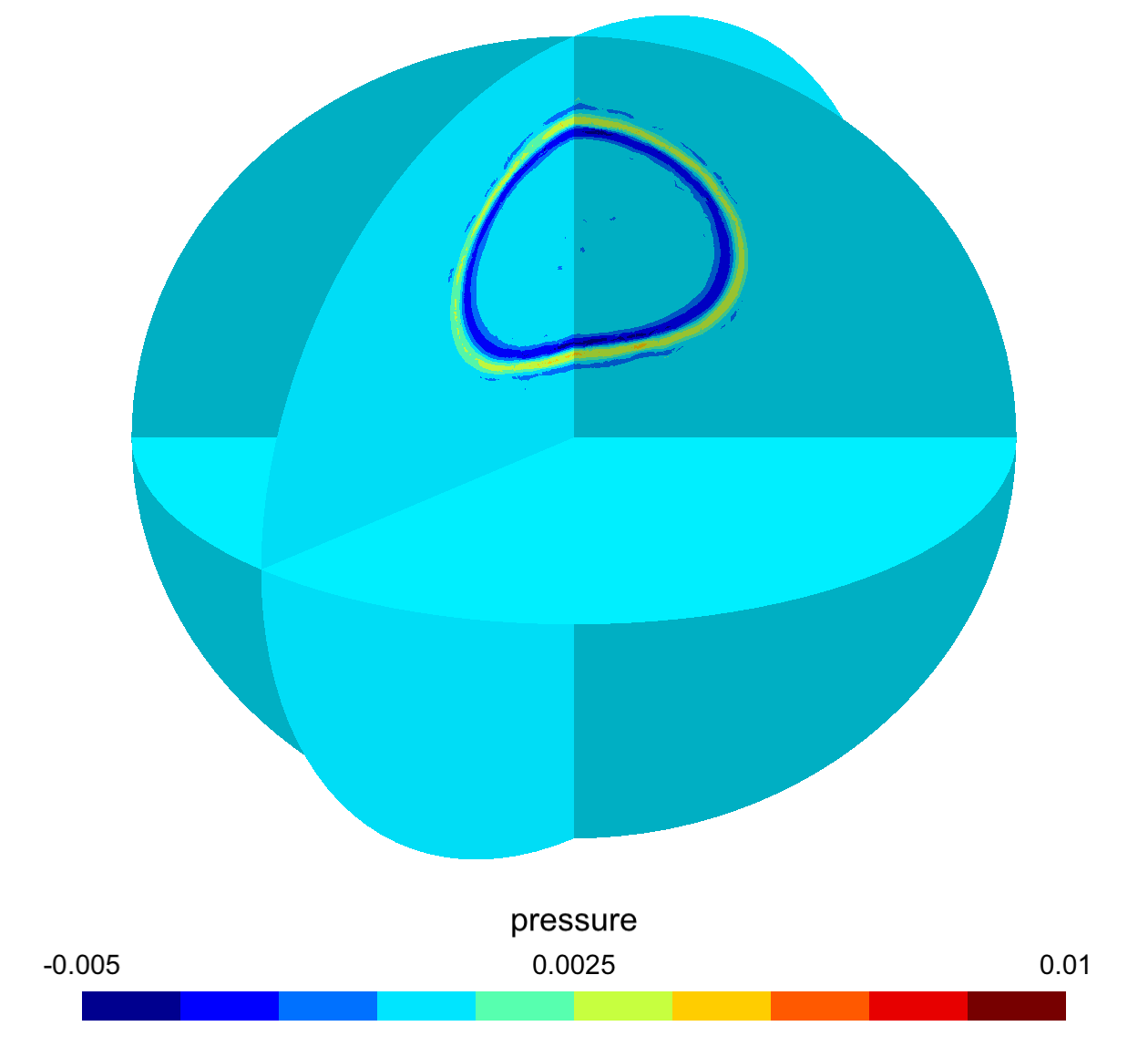}}
  \subfloat[Pressure at $t\approx .45$]{\includegraphics[width=.35\textwidth]{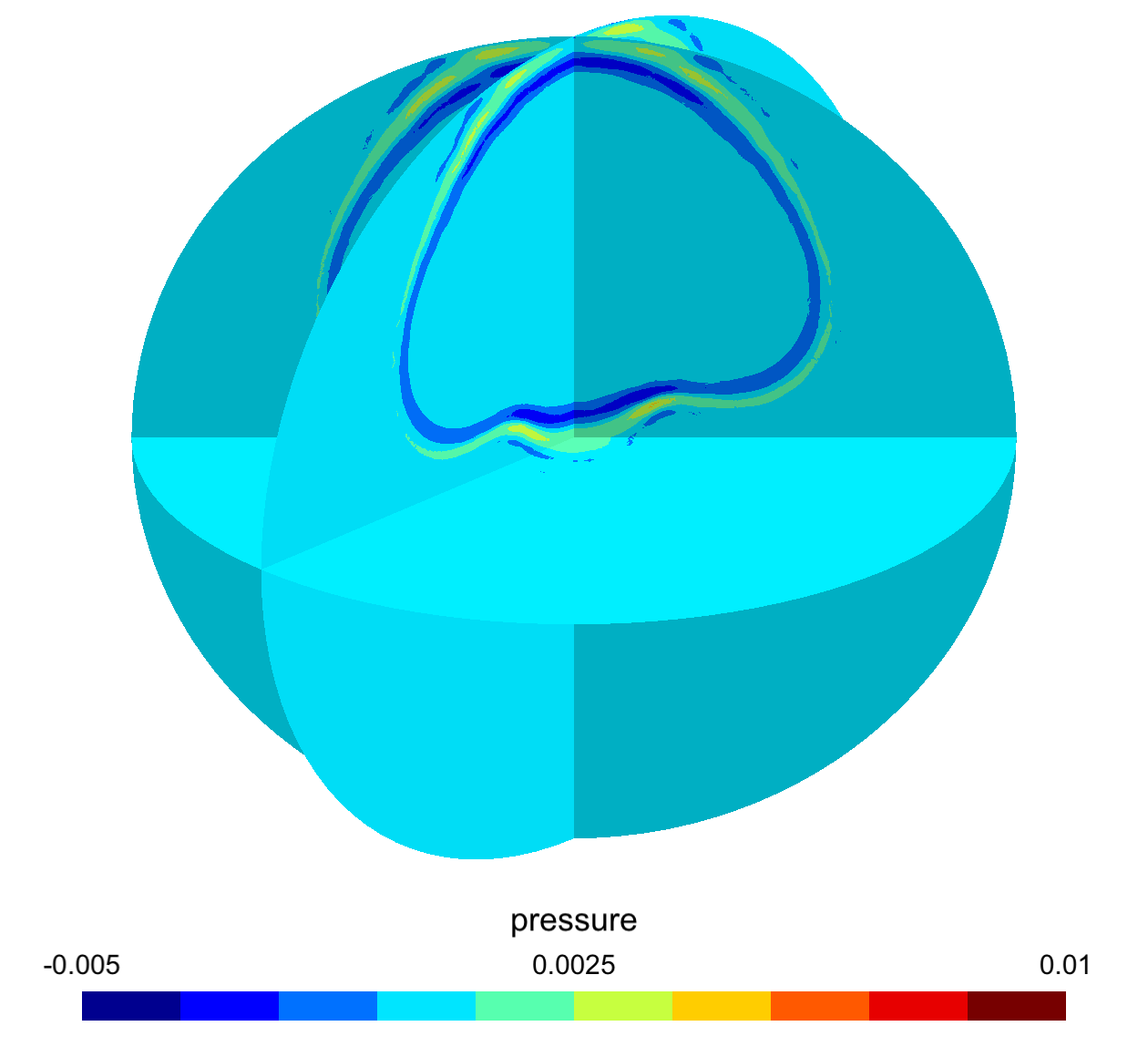}}
  \caption{Acoustic wave propagation in heterogeneous media with curvilinear elements of degree $N=4$.  Wavespeed varies in the radial coordinate.  Initial pressure is taken to be a Gaussian pulse centered around the green circle (\ref{subfig:c2}).  }  
  \label{fig:het}
  \end{figure}

\section{Conclusions and future work}

This work introduces a weight-adjusted discontinuous Galerkin (WADG) method for curvilinear meshes based on a weight-adjusted approximation of the inverse mass matrix.  This approximation results in low asymptotic storage cost, and $L^2$ error estimates with explicit constants depending on the Jacobian $J$ provide sufficient conditions under which the WADG pseudo-projection behaves similarly to the $L^2$ projection.  Numerical comparisons indicate that WADG is also more robust with respect to curved mesh deformations than existing low storage DG methods in the literature \cite{warburton2013low}.  Computational results are also given which verify the high order convergence of WADG for solutions on curved domains, and performance analysis results on an Nvidia GTX 980 GPU are presented.  

Currently, a limitation of WADG methods is that the error bound depends on the regularity of the weighting function.  This excludes non-affine elements with singular mappings, such as transitional pyramid elements \cite{bergot2010higher, chan2015orthogonal}.  It was observed in \cite{bergot2013higher, chan2015orthogonal} that the $L^2$ projection errors for LSC-DG on pyramids stalls under both $h$ and $p$-refinement, and numerical experiments indicate that $L^2$ errors for non-affine pyramids using WADG pseudo-projection behave similarly.  These issues will be addressed in future work.  

\section{Acknowledgments}

The authors wish to thank \textsc{Total} US E\&P for permission to publish.  The authors also acknowledge the use of \textsc{Gmsh} \cite{geuzaine2009gmsh} for both mesh generation and visualization of high order finite element solutions \cite{remacle2007efficient}.  JC and TW are funded in part by a grant from \textsc{Total} E\&P Research and Technology USA.  

\bibliographystyle{unsrt}
\bibliography{curved}{}

\end{document}